\newcommand{\Zint}{\mathbb {Z}}    
\newcommand{\Rea}{\mathbb {R}}      
\newcommand{\R}{\mathbb {R}}      
\newcommand{\Cplx}{\mathbb {C}}     
\newcommand{\halmos}{\rule{5pt}{5pt}}
\numberwithin{equation}{section}
\newtheorem{prop}{\bf Proposition}[section]
\newtheorem{thm}[prop]{\bf Theorem}
\newtheorem{lemma}[prop]{\bf Lemma}
\newtheorem{cor}[prop]{\bf Corollary}
\theoremstyle{definition} 
\newenvironment{remk}{\noindent{\bf Remark}\hskip 5pt}{\hfill{$\Box$}}
\begin{document}
\title[Polynomial solutions of $q$-Heun equation and ultradiscrete limit]
{Polynomial solutions of $q$-Heun equation and ultradiscrete limit}
\author{Kentaro Kojima}
\author{Tsukasa Sato}
\author{Kouichi Takemura}
\address{Department of Mathematics, Faculty of Science and Engineering, Chuo University, 1-13-27 Kasuga, Bunkyo-ku Tokyo 112-8551, Japan}
\email{takemura@math.chuo-u.ac.jp}
\subjclass[2010]{39A13,33E10,30C15}
\keywords{q-Heun equation, polynomial solution, ultradiscrete limit, q-difference equation, Heun equation, Ruijsenaars system}
\begin{abstract}
We study polynomial-type solutions of the $q$-Heun equation, which is related with quasi-exact solvability.
The condition that the $q$-Heun equation has a non-zero polynomial-type solution is described by the roots of the spectral polynomial, whose variable is the accessory parameter $E$.
We obtain sufficient conditions that the roots of the spectral polynomial are all real and distinct.
We consider the ultradiscrete limit to clarify the roots of the spectral polynomial and the zeros of the polynomial-type solution of the $q$-Heun equation.
\end{abstract}
\maketitle

\section{Introduction}

It is widely known that classical orthogonal polynomials play important roles in mathematics and physics.
Among them, the Legendre polynomial, the Chebyshev polynomial, the Gegenbauer polynomial and the Jacobi polynomial are essentially described by the hypergeometric function $_2 F _1(\alpha ,\beta ;\gamma ;z)$, which satisfies the hypergeometric differential equation 
\begin{align}
&  z(1-z) \frac{d^2y}{dz^2} + \left( \gamma - (\alpha + \beta +1)z \right) \frac{dy}{dz} -\alpha \beta  y=0.
\end{align}
It is a standard form of second order Fuchsian differential equation with three regular singularities $\{ 0,1,\infty \}$.

A $q$-difference analogue of the hypergeometric function is written as
\begin{align}
& _2 \phi _1 (a ,b ;c ;x) = \sum_{n=0}^{\infty} \frac{(a ;q)_n (b ;q)_n }{(q;q) _n (c ;q)_n } x^n, \; (\lambda ,q)_n= \prod_{i=0}^{n-1}(1- \lambda q^i),
\end{align}
and it is called the basis hypergeometric function or $q$-hypergeometric function.
The basis hypergeometric function satisfies the basic (or $q$-difference) hypergeometric equation 
\begin{equation}
(x-q) f(x/q) - ((a+b)x -q-c)f(x)+ (abx-c)f(q x)=0. 
\end{equation}
Note that every coefficient of $ f(x/q)$, $f(x)$ and $f(q x) $ is linear in $x$.

A standard form of second order Fuchsian differential equation with four regular singularities $\{ 0,1,t, \infty \}$ is given by
\begin{align}
\frac{d^2y}{dz^2} +  \left( \frac{\gamma}{z}+\frac{\delta }{z-1}+\frac{\epsilon }{z-t}\right) \frac{dy}{dz} + \frac{\alpha \beta z -B}{z(z - 1)(z - t)} y= 0
\label{eq:Heun}
\end{align}
with the condition $\gamma +\delta +\epsilon = \alpha +\beta +1$, and it is called Heun's differential equation.
The parameter $B$ is called an accessory parameter, which is independent from the local exponents.
As we shall explain later, polynomial solutions of the Heun equation have different features.

A $q$-difference analogue of Heun's differential equation was given by Hahn \cite{Hahn} in 1971 as the form
\begin{equation}
a(x) g(x/q) + b(x) g(x) + c(x) g(qx) =0
\label{eq:axgbxgcxg}
\end{equation}
such that $a(x)$, $b(x)$, $c(x)$ are polynomials such that $\deg_x a(x)= \deg_x c(x)=2 $, $a(0) \neq 0 \neq c(0) $ and $\deg _x b(x) \leq 2$.
Recently the $q$-Heun equation was recovered by two methods \cite{TakR}, one is by degeneration of Ruijsenaars-van Diejen operator \cite{vD0,RuiN}, and the other is by specialization of the linear $q$-difference equation related with $q$-Painlev\'e VI equations \cite{JS}.
We adopt the expression of the $q$-Heun equation as
\begin{align}
& (x-q^{h_1 +1/2} t_1) (x- q^{h_2 +1/2} t_2) g(x/q)  + q^{\alpha _1 +\alpha _2} (x - q^{l_1-1/2}t_1 ) (x - q^{l_2 -1/2} t_2) g(qx) \label{eq:RuijD5}
\\
&  \qquad -\{ (q^{\alpha _1} +q^{\alpha _2} ) x^2 + E x + q^{(h_1 +h_2 + l_1 + l_2 +\alpha _1 +\alpha _2 )/2 } ( q^{\beta /2}+ q^{-\beta/2}) t_1 t_2 \} g(x) =0. \nonumber
\end{align}
Note that the parameter $E$ in Eq.(\ref{eq:RuijD5}) may be regarded as an accessory parameter, which was already pointed out by Hahn.
By the limit $q\to 1$, we recover Heun's differential equation (see \cite{Hahn,TakR}).

In this paper we investigate polynomial-type solutions of the $q$-Heun equation.
Here we recall polynomial solutions of Heun's differential equation (\ref{eq:Heun}) by following \cite{CKLT2,Tak1} (see also \cite{WW,Ron}).
Set
\begin{equation}
y=\sum_{n=0}^{\infty} \tilde{c}_{n}x^{n},\quad (c_{0}=1),
\end{equation}
and substitute it to the differential equation (\ref{eq:Heun}).
Then the coefficients satisfy $t \gamma \tilde{c}_{1}   =B \tilde{c}_{0}$ and 
\begin{align}
 t (n+1)(n+\gamma ) \tilde{c}_{n+1}  =& [n\{(n-1+\gamma )(1+t)+  t \delta  + \epsilon \}+B] \tilde{c}_{n} \label{eq:Hlci}\\
 &  -(n-1+\alpha )(n-1+\beta) \tilde{c}_{n-1}, \; (n=1,2,\dots ). \nonumber
\end{align}
If $t \neq 0,1$ and $\gamma \not \in {\mathbb{Z}}_{\leq 0}$, then $\tilde{c}_{n}$ is a polynomial in $B$ of degree $n$ and we denote it by $\tilde{c}_{n}(B)$.
Moreover we assume that $\alpha=-N$ or $\beta=-N$ for some $N \in{\mathbb{Z}}_{\geq0} $.
Let $B_{0}$ be a solution to the equation $\tilde{c}_{N+1} (B)=0$. Then it follows from (\ref{eq:Hlci}) for $n=N+1$ that $\tilde{c}_{N+2} (B_{0})=0$.
By applying (\ref{eq:Hlci}) for $n=N+2, N+3, \dots$, we have $\tilde{c}_{n} (B_{0})=0$ for $n\geq N+3$.
Hence, if $\tilde{c}_{N+1} (B_{0})=0$, then the differential equation (\ref{eq:Heun}) have a non-zero polynomial solution.
More precisely, we have the following proposition.
\begin{prop} $($\cite{WW,Ron}$)$ \label{prop:Heunpolym} 
Assume that $t \not \in \{ 0 ,1 \}$, $\gamma \not \in {\mathbb{Z}}_{\leq 0} $, $(\alpha +N)(\beta +N)=0$ and $N \in{\mathbb{Z}}_{\geq 0} $.
If $B$ is a solution to the equation $\tilde{c}_{N+1} (B)=0$, then the differential equation (\ref{eq:Heun}) have a non-zero polynomial solution of degree no more than $N$.
\end{prop}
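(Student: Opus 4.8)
The plan is to show directly that the power-series coefficients $\tilde{c}_n(B)$ terminate at degree $N$ once $B$ is chosen to be a root of $\tilde{c}_{N+1}$. First I would record that, under the standing hypotheses $t \notin \{0,1\}$ and $\gamma \notin \mathbb{Z}_{\leq 0}$, the leading factor $t(n+1)(n+\gamma)$ on the left-hand side of (\ref{eq:Hlci}) is non-zero for every $n \geq 0$; indeed $\gamma \neq 0$ and $\gamma \neq -n$ for all $n \geq 1$. This guarantees that the three-term recursion (\ref{eq:Hlci}) determines $\tilde{c}_{n+1}$ uniquely from $\tilde{c}_n$ and $\tilde{c}_{n-1}$, and (as already observed) that each $\tilde{c}_n = \tilde{c}_n(B)$ is a polynomial of degree $n$ in the accessory parameter $B$, so that $\tilde{c}_{N+1}(B)$ has degree $N+1\geq 1$ and the equation $\tilde{c}_{N+1}(B)=0$ indeed admits solutions.

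The crux is the placement of the vanishing factor $(n-1+\alpha)(n-1+\beta)$ appearing in the $\tilde{c}_{n-1}$ term of (\ref{eq:Hlci}). Evaluating at $n = N+1$ turns this factor into $(\alpha+N)(\beta+N)$, which is zero by hypothesis. Fixing a root $B_0$ of $\tilde{c}_{N+1}$, the recursion at $n = N+1$ therefore loses both its $\tilde{c}_{N+1}(B_0)$ contribution (since $\tilde{c}_{N+1}(B_0)=0$) and its $\tilde{c}_N(B_0)$ contribution (since its coefficient vanishes), leaving $t(N+2)(N+1+\gamma)\,\tilde{c}_{N+2}(B_0)=0$, whence $\tilde{c}_{N+2}(B_0)=0$ by the non-vanishing of the leading factor.

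I would then close the argument by induction: for $n \geq N+2$, if $\tilde{c}_{n-1}(B_0) = \tilde{c}_n(B_0) = 0$, then the entire right-hand side of (\ref{eq:Hlci}) vanishes, forcing $\tilde{c}_{n+1}(B_0)=0$. With the base step $\tilde{c}_{N+1}(B_0)=\tilde{c}_{N+2}(B_0)=0$ just established, this yields $\tilde{c}_n(B_0)=0$ for all $n \geq N+1$. Hence $y = \sum_{n=0}^{N}\tilde{c}_n(B_0)\,x^n$ is a genuine terminating solution of (\ref{eq:Heun}) of degree at most $N$, and it is non-zero because $\tilde{c}_0 = 1$. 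The only point requiring care is the joint role of the two hypotheses: the condition on $\gamma$ keeps every denominator alive so that vanishing propagates all the way up, while $(\alpha+N)(\beta+N)=0$ supplies the single cancellation at $n=N+1$ that starts the propagation. There is no serious obstacle beyond verifying that (\ref{eq:Hlci}) is exactly the recursion produced by substituting the power series into (\ref{eq:Heun}), which is a routine comparison of coefficients.
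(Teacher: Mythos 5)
Your proposal is correct and follows essentially the same route as the paper: the paper's own argument (given in the text immediately preceding the proposition) likewise observes that at $n=N+1$ the factor $(n-1+\alpha)(n-1+\beta)=(\alpha+N)(\beta+N)$ vanishes, so that $\tilde{c}_{N+1}(B_0)=0$ forces $\tilde{c}_{N+2}(B_0)=0$, and then propagates the vanishing upward via the recursion (\ref{eq:Hlci}), using $t\neq 0$ and $\gamma\notin\mathbb{Z}_{\leq 0}$ to keep the leading coefficient $t(n+1)(n+\gamma)$ non-zero. No substantive difference.
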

We call $\tilde{c}_{N+1} (B)$ the spectral polynomial.
It is fundamental to study the zeros of the spectral polynomial.
Although the spectral polynomial $\tilde{c}_{N+1} (B)$ may have non-real roots or multiple roots in general, we have a sufficient condition that the spectral polynomial has only distinct real roots.
Namely, if $ (\alpha +N)(\beta +N)=0$, $N \in{\mathbb{Z}}_{\geq 0} $, $\delta $,$\epsilon $ and $\gamma $ are real, $\gamma >0$, $\delta + \epsilon  +\gamma +N >1$ and $t <0$, then the equation $\tilde{c}_{N+1} (B)=0$ has all its roots real and distinct.
Note that it can be proved by applying the argument of Sturm sequence (see \cite{CKLT2} for details).

In this paper, we investigate polynomial-type solutions of the $q$-Heun equation, and apply the argument of Sturm sequence in the $q$-deformed case.
Recall that the $q$-Heun equation is written in Eq.(\ref{eq:RuijD5}).
Set $\lambda _1 = (h_1 +h_2 -l_1-l_2 -\alpha _1-\alpha _2 -\beta +2)/2$.
Under the assumption that  $-\lambda _1 - \alpha _1 (:=N)$ is a non-negative integer and $\beta \not \in \{ 1,2,\dots ,N\}$, we obtain the algebraic equation $c_{N+1} (E)=0 $, which is an analogue of $\tilde{c}_{N+1} (B)=0$, such that the $q$-Heun equation has a solution of the form 
\begin{equation}
f(x)= x^{\lambda _1} \sum _{n=0}^{N } c_n (E_0)  x^n ,
\label{eq:gxxlapol0}
\end{equation}
if $E=E_0$ is a solution of the algebraic equation $c_{N+1} (E)=0 $ (see Proposition \ref{prop:prop0}).
We call $c_{N+1} (E)$ the spectral polynomial of the $q$-Heun equation.
We consider real-root property for the spectral polynomial $c_{N+1} (E)$ in Theorem \ref{thm:SturmqHeun}.

In general it would be impossible to solve the roots of the spectral polynomial $c_{N+1} (E)$ explicitly as well as the spectral polynomial $\tilde{c}_{N+1} (B)$ of the Heun equation.
Then we adopt an idea from the ultradiscrete limit $q \to +0$, as the case of the $q$-Painlev\'e equations \cite{TT,IIT,IT}, in order to understand the solutions to the algebraic equation $c_{N+1} (E)=0 $.
In section \ref{sec:UDL}, we obtain the behaviour of the solutions to $c_{N+1} (E)=0 $ as $q \to +0$ in some cases and it is described as $E_k \sim -cq^{d-k}$ $(k=1,2,\dots ,N+1)$ for some $c \in \R _{>0}$ and $d \in \R $ (see Theorems \ref{thm:specpol1} and \ref{thm:specpol2}).
We can also obtain the behaviour of the zeros of the polynomial-type solution of the $q$-Heun equation with the condition $E=E_k$ (see Theorems \ref{thm:eigenf1} and \ref{thm:eigenf2}).
Note that the solutions to $c_{N+1} (E)=0 $ as $q \to +0$ may not be written as $E_k \sim -cq^{d-k}$ $(k=1,2,\dots ,N+1)$ in some cases (see Remark in section \ref{sec:UDL}).

This paper is organized as follows.
In section \ref{sec:loc}, we consider the polynomial-type solution of the $q$-Heun equation and introduce the spectral polynomial $c_{N+1} (E) $.
In section \ref{sec:realroot}, we show the real root property of the spectral polynomial $c_{N+1} (E)$.
In section \ref{sec:UDL}, we analyze the solutions to $c_{N+1}(E)=0 $ by applying the ultradiscrete limit $q \to +0$.
In section \ref{sec:rem}, we give concluding remarks. 
In the appendix, we introduce theorems on the ultradiscrete limit of an algebraic equation and continuation of the solutions.
Throughout this paper, we assume $0<q<1$.

\section{Polynomial-type solutions of the $q$-Heun equation} \label{sec:loc}

Let $A ^{\langle 4 \rangle} $ be the operator defined by 
\begin{align}
 A^{\langle 4 \rangle} = & x^{-1} (x-q^{h_1 + 1/2} t_1) (x-q^{h_2 +1/2} t_2) T_{q^{-1}} \label{eq:qH} \\
& +  q^{\alpha _1 +\alpha _2} x^{-1} (x - q^{l_1 -1/2} t_1) (x - q^{l_2 -1/2} t_2) T_q \nonumber  \\
& -\{ (q^{\alpha _1} +q^{\alpha _2} ) x  + q^{(h_1 +h_2 + l_1 +l_2 +\alpha _1 +\alpha _2 )/2} ( q^{\beta/2} + q^{-\beta/2} ) t_1 t_2 x^{-1} \} , \nonumber 
\end{align}
where $T_{q^{-1}} g(x) =g(x/q)$ and $T_q g(x)=g(qx) $.
Then the $q$-Heun equation is written as
\begin{align}
& ( A^{\langle 4 \rangle} -E) g(x)=0,
\label{eq:A4E}
\end{align}
where $E$ is a constant.
The action of $A ^{\langle 4 \rangle} $ to $x^{\mu } $ is written as
\begin{align}
& A^{\langle 4 \rangle} x^{\mu } =  d^{\langle 4 \rangle, +} (\mu )  x^{\mu +1 } + d^{\langle 4 \rangle, 0} (\mu ) x^{\mu } + d^{\langle 4 \rangle, -} (\mu ) x^{\mu -1} , \label{eq:A4xmu}
\end{align}
where
\begin{align}
& d^{\langle 4 \rangle, +} (\mu ) = q^{-\mu } (1-  q^{\alpha _1 +\mu } )  (1-  q^{\alpha _2 +\mu } ) ,\label{eq:d4+} \\
& d^{\langle 4 \rangle, 0} (\mu ) = -q^{-\mu } \{q^{1/2} (q^{h_1 } t_1 +q^{h_2 } t_2 ) + ( q^{l_1} t_1 +q^{l_2} t_2 ) q^{\alpha _1 +\alpha _2 + 2 \mu -1/2} \} , \nonumber \\
&  d^{\langle 4 \rangle, -} (\mu ) = t_1 t_2 q^{h_1 + h_2 + 1} q^{-\mu } (1-  q^{\mu -\lambda _1} )  (1-  q^{\mu -\lambda _2 } ) \nonumber 
\end{align}
and 
\begin{equation}
\lambda _1 = (h_1 +h_2 -l_1-l_2 -\alpha _1-\alpha _2 -\beta +2)/2, \; \lambda _2 = (h_1 +h_2 -l_1-l_2 -\alpha _1-\alpha _2 +\beta +2)/2 .
\label{eq:A4la1la2}
\end{equation}
In a special case, it is shown in \cite{TakqH} that the operator $A^{\langle 4 \rangle} $ preserves a finite-dimensional space (see \cite{TakQ} for the Heun equation). 
\begin{prop} $($\cite{TakqH}$)$ \label{prop:TakqH}
Let $\lambda \in \{ \lambda _1 ,\lambda _2\}$, $\alpha \in \{  \alpha _1 , \alpha _2 \}$ and assume that $-\lambda - \alpha (:=N)$ is a non-negative integer. 
Let $V^{\langle 4 \rangle}$ be the space spanned by the monomials $x^{\lambda +k} $ $(k=0,\dots ,N)$, i.e.
\begin{equation}
V^{\langle 4 \rangle} = \{ c_0 x^{\lambda } +c_1 x^{\lambda +1} +\cdots + c_N x^{\lambda +N} | c_0 , c_1 , \cdots , c_N \in \Cplx \} .
\end{equation}
Then the operator $A^{\langle 4 \rangle} $ preserves the space $V^{\langle 4 \rangle} $.
\end{prop}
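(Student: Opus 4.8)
The plan is to verify the claim directly from the explicit tridiagonal action of $A^{\langle 4 \rangle}$ on monomials recorded in (\ref{eq:A4xmu}). Since $V^{\langle 4 \rangle}$ is the linear span of the monomials $x^{\lambda + k}$ for $k = 0, 1, \dots, N$, and since $A^{\langle 4 \rangle}$ is linear, it suffices to show that $A^{\langle 4 \rangle} x^{\lambda + k} \in V^{\langle 4 \rangle}$ for each such $k$. By (\ref{eq:A4xmu}), applying $A^{\langle 4 \rangle}$ to $x^{\lambda + k}$ produces a linear combination of the three monomials $x^{\lambda + k + 1}$, $x^{\lambda + k}$ and $x^{\lambda + k - 1}$. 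For the interior indices $1 \leq k \leq N - 1$, all three exponents lie in the range $\{\lambda, \lambda + 1, \dots, \lambda + N\}$, so the image automatically belongs to $V^{\langle 4 \rangle}$ and no further work is needed there.

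The only two places where the image could leave $V^{\langle 4 \rangle}$ are the boundaries: at the top index $k = N$ the raising term $d^{\langle 4 \rangle, +}(\lambda + N)\, x^{\lambda + N + 1}$ threatens to exceed the space, and at the bottom index $k = 0$ the lowering term $d^{\langle 4 \rangle, -}(\lambda)\, x^{\lambda - 1}$ threatens to drop below it. The key point I would emphasize is that both of these offending coefficients vanish identically because of the hypotheses on $\lambda$ and $\alpha$. Using $N = -\lambda - \alpha$, hence $\lambda + N = -\alpha$, I would substitute $\mu = -\alpha$ into the formula (\ref{eq:d4+}) for $d^{\langle 4 \rangle, +}$ to obtain $d^{\langle 4 \rangle, +}(-\alpha) = q^{\alpha}(1 - q^{\alpha_1 - \alpha})(1 - q^{\alpha_2 - \alpha})$; since $\alpha \in \{\alpha_1, \alpha_2\}$, one of the two factors equals $1 - q^0 = 0$, so the raising coefficient vanishes at the top.

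Symmetrically, substituting $\mu = \lambda$ into the formula for $d^{\langle 4 \rangle, -}$ gives $d^{\langle 4 \rangle, -}(\lambda) = t_1 t_2\, q^{h_1 + h_2 + 1}\, q^{-\lambda}(1 - q^{\lambda - \lambda_1})(1 - q^{\lambda - \lambda_2})$; since $\lambda \in \{\lambda_1, \lambda_2\}$, one of these factors equals $1 - q^0 = 0$, so the lowering coefficient vanishes at the bottom. With both boundary coefficients gone, $A^{\langle 4 \rangle} x^{\lambda + N}$ reduces to a combination of $x^{\lambda + N}$ and $x^{\lambda + N - 1}$, while $A^{\langle 4 \rangle} x^{\lambda}$ reduces to a combination of $x^{\lambda + 1}$ and $x^{\lambda}$, both lying inside $V^{\langle 4 \rangle}$, which completes the verification. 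I do not expect a genuine obstacle here: the computation is routine, and the one thing worth noticing is that the defining choices $\lambda \in \{\lambda_1, \lambda_2\}$ and $\alpha \in \{\alpha_1, \alpha_2\}$ are precisely what forces the two escaping coefficients to vanish, whereas the middle coefficient $d^{\langle 4 \rangle, 0}$ is harmless since it preserves the degree.
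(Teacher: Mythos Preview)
Your proposal is correct and follows exactly the same approach as the paper: the paper's proof simply notes that the proposition follows from $d^{\langle 4 \rangle, -}(\lambda) = 0$ and $d^{\langle 4 \rangle, +}(\lambda + N) = 0$ (referring to \cite{TakqH} for details), and your argument spells out precisely why those two vanishings hold and why they suffice for invariance of $V^{\langle 4 \rangle}$.
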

\begin{proof}
The proposition follows from $d^{\langle 4 \rangle, -} (\lambda  ) =0 $ and $d^{\langle 4 \rangle, +} (\lambda +N ) = 0 $.
See \cite{TakqH} for details.
\end{proof}
Note that the values $\lambda _1 $ and $\lambda _2 $ in Eq.(\ref{eq:A4la1la2}) are exponents of $q$-Heun equation (Eq.(\ref{eq:A4E})) about $x=0$, and the values $ \alpha _1$ and $\alpha _2 $ are exponents about $x=\infty $ (see \cite{TakqH}).

Set
\begin{align}
& g(x)= x^{\lambda } \sum _{n=0}^{N} c_n x^n \; (c_0 \neq 0),
\label{eq:gxx0}
\end{align}
and substitute it into Eq.(\ref{eq:A4E}).
Under the assumption of Proposition \ref{prop:TakqH}, we have $d^{\langle 4 \rangle, -} (\lambda  ) =0 $, $d^{\langle 4 \rangle, +} (\lambda +N ) = 0 $ and
\begin{align}
& c_1  d^{\langle 4 \rangle, -} (\lambda + 1  ) + c_0 (d^{\langle 4 \rangle, 0} (\lambda ) -E) =0, \\
& c_n d^{\langle 4 \rangle, -} (\lambda +n ) + c_{n-1} (d^{\langle 4 \rangle, 0} (\lambda +n -1 ) -E) +c_{n-2} d^{\langle 4 \rangle, +} (\lambda +n -2 ) =0 , \quad (2 \leq n \leq N) , \nonumber \\
& c_{N} (d^{\langle 4 \rangle, 0} (\lambda +N ) -E) +c_{N-1} d^{\langle 4 \rangle, +} (\lambda +N -1 ) =0 .\nonumber 
\end{align}
From now on, we investigate the case $\lambda =\lambda _1$.
If $(\beta =) \lambda _2 - \lambda _1 \not \in \{ 1,2,\dots ,N \}  $, then we have $d^{\langle 4 \rangle, -} (\lambda _1 +n ) \neq 0$ for $n =1,2, \dots , N $ and the coefficients $c_n $ ($n =1,2, \dots , N $) are determined recursively.
If we regard $E$ as an indeterminant and set $c_0=1$, then $c_n$ is a polynomial of $E $ of degree $n$ and we denote it by $c_n(E)$.
Then we have
\begin{align}
& c_n (E) t_1 t_2 q^{h_1 +h_2 } ( 1 - q^{n }) ( 1 - q^{n -\beta }) \label{eq:rec}  \\
& - c_{n-1} (E)  [E q^{n -1 +\lambda _1} + q^{1/2}(q^{h_1 } t_1 +q^{h_2 } t_2 ) +  (q^{l_1 } t_1 +q^{l_2 } t_2 ) q^{2(n -1 +\lambda _1) +\alpha _1 +\alpha _2 -1/2 }]  \nonumber \\
& + c_{n-2} (E) q (1 - q^{n -2 +\lambda _1 +\alpha _1})(1 - q^{n-2 +\lambda _1 + \alpha _2}) =0 , \nonumber 
\end{align}
for $n=1,2,\dots ,N$, where $c_{-1} (E) =0$ and $c_{0} (E)=1$.
We define the polynomial $c_{N+1} (E)$ by
\begin{align}
& c_{N+1} (E) t_1 t_2 q^{h_1 +h_2 } ( 1 - q^{N+1 }) ( 1 - q^{N+1 -\beta }) \label{eq:recN+1}  \\
& - c_{N} (E)  [E q^{N +\lambda _1} + q^{1/2}(q^{h_1 } t_1 +q^{h_2 } t_2 ) +  (q^{l_1 } t_1 +q^{l_2 } t_2 ) q^{2(N +\lambda _1) +\alpha _1 +\alpha _2 -1/2 }]  \nonumber \\
& + c_{N-1} (E) q (1 - q^{N -1 +\lambda _1 +\alpha _1})(1 - q^{N -1 +\lambda _1 + \alpha _2}) =0 \nonumber 
\end{align}
in the case $N+1 -\beta \neq 0 $ and
\begin{align}
& c_{N+1} (E) = c_{N} (E)  [E q^{N +\lambda _1} + q^{1/2}(q^{h_1 } t_1 +q^{h_2 } t_2 ) +  (q^{l_1 } t_1 +q^{l_2 } t_2 ) q^{2(N +\lambda _1) +\alpha _1 +\alpha _2 -1/2 }] \label{eq:recN+1=0}  \\
& - c_{N-1} (E) q (1 - q^{N -1 +\lambda _1 +\alpha _1})(1 - q^{N -1 +\lambda _1 + \alpha _2}) \nonumber 
\end{align}
in the case $N+1 -\beta = 0 $.

\begin{prop} \label{prop:prop0}
Let $\lambda _1$ be the value in Eq.(\ref{eq:A4la1la2}), $\alpha \in \{  \alpha _1 , \alpha _2 \}$ and assume that $-\lambda _1 - \alpha (:=N)$ is a non-negative integer and $\beta \not \in \{ 1,2,\dots ,N\}$.
Set $c_{-1}(E)=0 $,  $c_0(E)=1 $ and we determine the polynomials $c_n(E)$ $(n=1,\dots ,N)$ recursively by Eq.(\ref{eq:rec}) 
Assume that $E=E_0$ is a solution of the algebraic equation
\begin{equation}
c_{N+1} (E)=0 ,
\label{eq:cN+1}
\end{equation}
(see Eqs.(\ref{eq:recN+1}), (\ref{eq:recN+1=0})).
Then the $q$-Heun equation has a non-zero solution of the form
\begin{equation}
f(x)= x^{\lambda _1} \sum _{n=0}^{N } c_n (E_0)  x^n .
\label{eq:gxxlapol}
\end{equation}
\end{prop}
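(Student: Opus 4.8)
The plan is to substitute the candidate solution $f(x)=x^{\lambda_1}\sum_{n=0}^{N}c_n(E_0)x^n$ directly into the $q$-Heun equation $(A^{\langle 4\rangle}-E_0)f=0$ and verify that every coefficient vanishes. Using the action formula (\ref{eq:A4xmu}), I would expand
\begin{align*}
(A^{\langle 4\rangle}-E_0)f=\sum_{n=0}^{N}c_n(E_0)\bigl[& d^{\langle 4\rangle,+}(\lambda_1+n)\,x^{\lambda_1+n+1}+(d^{\langle 4\rangle,0}(\lambda_1+n)-E_0)\,x^{\lambda_1+n}\\
& +d^{\langle 4\rangle,-}(\lambda_1+n)\,x^{\lambda_1+n-1}\bigr],
\end{align*}
and then collect the coefficient of each monomial $x^{\lambda_1+m}$. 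The two extreme monomials are killed by the mechanism of Proposition \ref{prop:TakqH}: the coefficient of $x^{\lambda_1-1}$ is $c_0(E_0)\,d^{\langle 4\rangle,-}(\lambda_1)=0$ and the coefficient of $x^{\lambda_1+N+1}$ is $c_N(E_0)\,d^{\langle 4\rangle,+}(\lambda_1+N)=0$, so the image lies in the span of $x^{\lambda_1},\dots,x^{\lambda_1+N}$.

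Next I would match each interior coefficient with the recursion. For $0\le m\le N-1$ the coefficient of $x^{\lambda_1+m}$ is
\begin{align*}
& c_{m+1}(E_0)\,d^{\langle 4\rangle,-}(\lambda_1+m+1)+c_m(E_0)\,(d^{\langle 4\rangle,0}(\lambda_1+m)-E_0)\\
& \qquad +c_{m-1}(E_0)\,d^{\langle 4\rangle,+}(\lambda_1+m-1),
\end{align*}
which, after multiplication by the nonzero factor $q^{\lambda_1+m}$ and insertion of the explicit expressions (\ref{eq:d4+}), is exactly the left-hand side of the recursion (\ref{eq:rec}) with $n=m+1$; hence it vanishes by the very definition of the $c_n(E)$. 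The only remaining coefficient is that of $x^{\lambda_1+N}$, namely $c_N(E_0)\,(d^{\langle 4\rangle,0}(\lambda_1+N)-E_0)+c_{N-1}(E_0)\,d^{\langle 4\rangle,+}(\lambda_1+N-1)$, where no $c_{N+1}$ term occurs because $f$ has no $x^{\lambda_1+N+1}$ component.

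The crux of the argument, and the one point where the two definitions (\ref{eq:recN+1}) and (\ref{eq:recN+1=0}) enter, is to tie this last coefficient to $c_{N+1}(E_0)$. Multiplying it by $q^{\lambda_1+N}$ and comparing with (\ref{eq:recN+1})/(\ref{eq:recN+1=0}), I expect to find that $q^{\lambda_1+N}$ times the coefficient of $x^{\lambda_1+N}$ equals $-c_{N+1}(E_0)$ up to the prefactor $t_1t_2q^{h_1+h_2}(1-q^{N+1})(1-q^{N+1-\beta})$. When $N+1-\beta\neq0$ this prefactor is nonzero (here $0<q<1$ and $t_1,t_2\neq0$ are used), so $c_{N+1}(E_0)=0$ forces the top coefficient to vanish; when $N+1-\beta=0$ the factor $(1-q^{N+1-\beta})$ degenerates, which is precisely why $c_{N+1}(E)$ is set by the separate formula (\ref{eq:recN+1=0}), and there the coefficient of $x^{\lambda_1+N}$ equals $-q^{-\lambda_1-N}c_{N+1}(E_0)$ outright. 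In either case $c_{N+1}(E_0)=0$ annihilates every coefficient, so $(A^{\langle 4\rangle}-E_0)f=0$, i.e.\ $f$ of the form (\ref{eq:gxxlapol}) solves the $q$-Heun equation (\ref{eq:A4E}); moreover $f\neq0$ since $c_0(E_0)=1$. The main technical obstacle is exactly this last bookkeeping, in particular checking that the ad hoc definition (\ref{eq:recN+1=0}) in the resonant case $\beta=N+1$ still records the vanishing of the top coefficient; once the normalizing power $q^{\lambda_1+N}$ is pinned down, the rest is a routine comparison of the explicit coefficients (\ref{eq:d4+}).
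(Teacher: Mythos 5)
Your proposal is correct and follows essentially the same route as the paper: substituting $f$ into $(A^{\langle 4\rangle}-E_0)f=0$, observing that the extreme coefficients vanish via $d^{\langle 4\rangle,-}(\lambda_1)=0$ and $d^{\langle 4\rangle,+}(\lambda_1+N)=0$, matching the interior coefficients with the recursion (\ref{eq:rec}), and reducing the top coefficient to $c_{N+1}(E_0)=0$ through (\ref{eq:recN+1}) or (\ref{eq:recN+1=0}). The paper merely states this equivalence more tersely; your bookkeeping of the normalizing power $q^{\lambda_1+m}$ and of the resonant case $\beta=N+1$ is accurate.
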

\begin{proof}
The condition that the function $f(x)$ in Eq.(\ref{eq:gxxlapol}) satisfies $q$-Heun equation is equivalent to Eq.(\ref{eq:rec}) substituted by $E=E_0$ for $n=1,2,\dots ,N$ and 
\begin{align}
& c_{N} (E_0)  [q^{1/2}(q^{h_1 } t_1 +q^{h_2 } t_2 ) + E q^{N +\lambda _1} + (q^{l_1 } t_1 +q^{l_2 } t_2 ) q^{2(N +\lambda _1) +\alpha _1 +\alpha _2 -1/2 } \label{eq:cNcN-1cond} \\
& - c_{N-1} (E_0) q (1 - q^{N-1 +\lambda _1 +\alpha _1})(1 - q^{N-1 +\lambda _1 + \alpha _2}) =0.  \nonumber
\end{align}
Then Eq.(\ref{eq:rec}) substituted by $E=E_0$ for $n=1,2,\dots ,N$ follows from the definition of the polynomials $c_n(E)$ $(n=1,\dots ,N)$, and Eq.(\ref{eq:cNcN-1cond}) follows from $c_{N+1} (E_0)=0  $ by Eqs.(\ref{eq:recN+1}), (\ref{eq:recN+1=0}).
\end{proof}
We call $f(x)$ in Eq.(\ref{eq:gxxlapol}) a polynomial-type solution, which is a product of $x^{\lambda _1} $ and a polynomial.
If the accessory parameter $E$ of the $q$-Heun equation satisfies the equation $c_{N+1} (E)=0$, then the $q$-Heun equation has a polynomial-type solution.
We call $c_{N+1} (E)$ the spectral polynomial of the $q$-Heun equation.

\section{Real root property of the spectral polynomial of the $q$-Heun equation} \label{sec:realroot}

We may use the theory of Sturm sequence from the three term relations for $c_n$ and we obtain real root property of the spectral polynomial $c_{N+1} (E) $.
The following lemma is obtained by the argument of Sturm sequence, which was essentially applied to Lam\'e equation and Heun equation in \cite{WW,Tak1,CKLT2}.
\begin{lemma} \label{lem:dist}
Let $N$ be a non-negative integer.
Assume that $d_n > 0 $ and $d'_{n+1} >0 $ for $n=1, \dots ,N$ and $p_n >0$ and $q_n \in \Rea $ for $n=1, \dots ,N+1$. 
Set $c_{-1}(E)=0$, $c_0 (E)=1$, and determine the polynomial $c_n (E)$ $(n=1,2,\dots ,N+1)$ recursively by
\begin{equation}
d_{n} c_n (E) = (p_n E +q_n ) c_{n-1} (E) - d'_n c_{n-2} (E) .
\label{eq:lem3termrel}
\end{equation}
In the case $ d_{N+1} = 0$, we set $c_{N+1} (E) = (p_{N+1} E +q_{N+1} ) c_{N} (E) - d'_{N+1} c_{N-1} (E)  $.
Then the polynomial $c_n (E)$ $(n=1,2,\dots ,N+1)$ has $n$ real distinct zeros $s_{j}^{(n)}$ $(j=1,\dots, n)$ such that
\begin{equation}
s_{1}^{(n)}<s_{1}^{(n-1)}<s_{2}^{(n)}<s_{2}^{(n-1)}<\cdots<s_{n-1}^{(n)}<s_{n-1}^{(n-1)}<s_{n}^{(n)}
\end{equation}
for $n=2,\dots ,N+1$. 
\end{lemma}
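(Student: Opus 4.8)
The plan is to run the classical Sturm-sequence (equivalently, orthogonal-polynomial) interlacing argument, proving the statement by induction on $n$. First I would record two structural facts that follow directly from the recurrence Eq.(\ref{eq:lem3termrel}). Since $c_0(E)=1$ and the leading term of $c_n$ comes from $(p_n E)c_{n-1}(E)/d_n$, an easy induction using $p_n>0$ and $d_n>0$ shows that each $c_n$ has degree exactly $n$ with positive leading coefficient; hence $c_n(E)\to+\infty$ as $E\to+\infty$, while $\mathrm{sign}\,c_n(E)=(-1)^n$ as $E\to-\infty$. For the top index the same conclusion holds in both branches of the definition: when $d_{N+1}>0$ we divide Eq.(\ref{eq:lem3termrel}) by $d_{N+1}$, and when $d_{N+1}=0$ the displayed formula $c_{N+1}=(p_{N+1}E+q_{N+1})c_N-d'_{N+1}c_{N-1}$ again has degree $N+1$ and positive leading coefficient. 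The second fact is the evaluation of $c_{n+1}$ at a zero $s$ of $c_n$: putting $c_n(s)=0$ in Eq.(\ref{eq:lem3termrel}) gives $c_{n+1}(s)=-(d'_{n+1}/d_{n+1})c_{n-1}(s)$ for $n+1\le N$, and $c_{N+1}(s)=-d'_{N+1}c_{N-1}(s)$ in the boundary case; since $d'_{n+1}>0$ (and $d_{n+1}>0$ when present), in every case $c_{n+1}(s)$ has the sign opposite to $c_{n-1}(s)$.

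With these in hand I would induct on the interlacing property. Assume $c_{n-1}$ and $c_n$ have the stated simple real zeros with those of $c_{n-1}$ strictly separating those of $c_n$, i.e. $s_1^{(n)}<s_1^{(n-1)}<\cdots<s_{n-1}^{(n-1)}<s_n^{(n)}$. Because all zeros of $c_{n-1}$ lie below $s_n^{(n)}$ and $c_{n-1}$ has positive leading coefficient, $c_{n-1}(s_n^{(n)})>0$; crossing exactly one zero of $c_{n-1}$ as we pass from one zero of $c_n$ to the next, we obtain $\mathrm{sign}\,c_{n-1}(s_j^{(n)})=(-1)^{n-j}$. By the opposite-sign relation, $\mathrm{sign}\,c_{n+1}(s_j^{(n)})=(-1)^{n-j+1}$, so $c_{n+1}$ changes sign on each of the $n-1$ intervals $(s_j^{(n)},s_{j+1}^{(n)})$, producing one zero in each. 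At the two ends, $c_{n+1}(s_n^{(n)})<0$ together with $c_{n+1}(E)\to+\infty$ as $E\to+\infty$ yields a zero above $s_n^{(n)}$, while the sign of $c_{n+1}(s_1^{(n)})$ is opposite to its sign at $-\infty$, yielding a zero below $s_1^{(n)}$. This accounts for $n+1$ distinct real zeros; as $\deg c_{n+1}=n+1$ they are all of them (hence simple), and they interlace the zeros of $c_n$ exactly as required, completing the induction.

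The induction starts at $n=1$ with the pair $c_0=1$ (positive everywhere) and $c_1$, whose single zero is $-q_1/p_1$; the opposite-sign relation gives $c_2(s_1^{(1)})<0$, and since $c_2$ has positive leading coefficient and even degree it has one zero on either side of $s_1^{(1)}$, the first instance of the interlacing. I do not expect a genuine obstacle here, as this is a robust classical argument; the only points demanding care are bookkeeping ones, namely verifying that the exact-degree and positive-leading-coefficient property survives the boundary definition of $c_{N+1}$ (both when $d_{N+1}>0$ and when $d_{N+1}=0$), so that the $E\to\pm\infty$ signs are as claimed, and checking that the opposite-sign evaluation holds uniformly at the top index, so that the inductive step applies to $c_{N+1}$ verbatim.
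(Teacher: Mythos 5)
Your argument is the same Sturm-sequence interlacing induction that the paper uses, and every step you describe for indices $n\le N$ is correct: exact degree and positive leading coefficient by induction, the sign pattern $\mathrm{sign}\,c_{n-1}(s_j^{(n)})=(-1)^{n-j}$, the opposite-sign evaluation $d_{n+1}c_{n+1}(s)=-d'_{n+1}c_{n-1}(s)$ at a zero $s$ of $c_n$, and the count of sign changes in the $n-1$ interior gaps plus the two end intervals.

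There is, however, one subcase you have overlooked at exactly the point you flagged as needing care. The hypotheses give $d_n>0$ only for $n=1,\dots,N$; the sign of $d_{N+1}$ is unconstrained, and the lemma only singles out $d_{N+1}=0$ for a separate definition. You write that ``the same conclusion holds in both branches of the definition,'' treating only $d_{N+1}>0$ and $d_{N+1}=0$, but $d_{N+1}<0$ is also permitted. In that case $c_{N+1}$, obtained by dividing the recurrence by $d_{N+1}$, has \emph{negative} leading coefficient, so your claimed signs as $E\to\pm\infty$ are reversed, and at a zero $s$ of $c_N$ one gets $c_{N+1}(s)=-(d'_{N+1}/d_{N+1})\,c_{N-1}(s)$, which now has the \emph{same} sign as $c_{N-1}(s)$ rather than the opposite one; both ingredients of your final inductive step fail as stated. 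The repair is short and is exactly what the paper does: since the two sign reversals cancel, apply the argument to $\tilde{c}_{N+1}(E)=-c_{N+1}(E)$ (equivalently replace $d_{N+1}$ by $-d_{N+1}>0$), which has positive leading coefficient and satisfies the opposite-sign relation, and then observe that $c_{N+1}$ and $\tilde{c}_{N+1}$ have the same zeros. Note that the same issue already touches your base case when $N=1$, since there $d_2=d_{N+1}$ need not be positive. With this one additional case your proof matches the paper's.
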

\begin{proof}
It follows from the assumption that $c_n(E)$ is a polynomial of degree $n$ such that the coefficient of $E^n$ is positive.
The polynomial $c_1(E)= (p_{1} E +q_{1} )/d_1  $ has one real zero.
We show that if $r\in \{ 1,2,\dots ,N-1 \} $ and the polynomials $c_{r-1} (E)$ and $c_{r} (E) $ has real distinct zeros such that 
\begin{equation}
s_{1}^{(r)}<s_{1}^{(r-1)}<s_{2}^{(r)}<s_{2}^{(r-1)}<\cdots<s_{r-1}^{(r)}<s_{r-1}^{(r-1)}<s_{r}^{(r)},
\label{eq:lemrr-1}
\end{equation}
then the polynomial $c_{r+1} (E)$ has $r+1$ real distinct zeros such that 
\begin{equation}
s_{1}^{(r+1)}<s_{1}^{(r)}<s_{2}^{(r+1)}<s_{2}^{(r)}<\cdots<s_{r}^{(r+1)}<s_{r}^{(r)}<s_{r+1}^{(r+1)}.
\label{eq:lemr+1r}
\end{equation}
Since $c_{r-1} (s_{r-1}^{(r-1)}) =0  $, $s_{r-1}^{(r-1)}<s_{r}^{(r)} $ and $c_{r-1} (E) \to +\infty $ as $E \to +\infty $, we have $c_{r-1} (s_{r}^{(r)}) >0  $.
Moreover it follows from Eq.(\ref{eq:lemrr-1}) and $c_{r-1} (s_{r-j}^{(r-1)}) =0 $ $(j=1,2, \dots ,r-1)$  that $c_{r-1} (s_{r-1}^{(r)}) <0$, $c_{r-1} (s_{r-2}^{(r)}) >0 , \dots $, i.e.~$(-1)^{j }c_{r-1} (s_{r-j}^{(r)}) >0$ $(j=1,2,\dots ,r-1)$. 
By $c_{r-1} (s_{r-j}^{(r)}) =0 $ and Eq.(\ref{eq:lem3termrel}), we obtain $d_{r+1} c_{r+1} (s_{r-j}^{(r)}) =  - d'_{r+1} c_{r-1} (s_{r-j}^{(r)})$.
Therefore $(-1)^{j +1}c_{r+1} (s_{r-j}^{(r)}) >0$ for $j=0,1,\dots ,r-1$, which follows from the assumption $d_{r+1} >0$ and $d'_{r+1} >0$.
Since $c_{r+1}(s_{r}^{(r)})<0 $ and $c_{r+1} (E) \to +\infty $ as $E \to +\infty $, there exists a real number $s_{r+1}^{(r+1)} $ such that $s_{r}^{(r)}<s_{r+1}^{(r+1)} $ and $c_{r+1} (s_{r+1}^{(r+1)}) =0 $.
It follows from $c_{r+1} (s_{r-j}^{(r)}) c_{r+1} (s_{r-j+1}^{(r)}) <0$ and the intermediate value theorem that there exists a real number $s_{r-j+1}^{(r+1)} $ such that $s_{r-j}^{(r)}<s_{r-j+1}^{(r+1)} < s_{r-j+1}^{(r)}$ and $c_{r+1} (s_{r-j+1}^{(r+1)}) =0 $ for $j=1,2, \dots ,r-1$.
It also follows from $(-1)^r c_{r+1}(s_{1}^{(r)})>0 $ and $c_{r+1} (E) \to (-1)^{r+1} \infty $ as $E \to -\infty $ that there exists a real number $s_{1}^{(r+1)} $ such that $s_{1}^{(r+1)} < s_{1}^{(r)}$ and $c_{r+1} (s_{1}^{(r+1)}) =0 $.
Therefore the polynomial $c_{r+1} (E)$ has $r+1$ real distinct zeros $s_{j}^{(r+1)} $ $(j=1,2,\dots ,r+1) $ which satisfy Eq.(\ref{eq:lemr+1r}).

It remains to be shown that $c_{N+1} (E)$ has $N+1$ real distinct zeros $s_{j}^{(N+1)} $  $(j=1,2,\dots ,N+1) $ which satisfy Eq.(\ref{eq:lemr+1r}) for $r=N$.
If $d_{N+1} >0$, then it is shown by applying the previous discussion.
If $d_{N+1} =0$, then $c_{N+1} (E)$ is defined separately and it is reduced to the case $d_{N+1} =1$.
If $d_{N+1} <0$, then we set $\tilde{d}_{N+1} = -d_{N+1} $ and $\tilde{c}_{N+1} (E)= -c_{N+1} (E)$.
Since $\tilde{d}_{N+1} >0 $, it is shown that $\tilde{c}_{N+1} (E)$ has $N+1$ real distinct zeros $s_{j}^{(N+1)} $  $(j=1,2,\dots ,N+1) $ which satisfy Eq.(\ref{eq:lemr+1r}) for $r=N$.
Obviously the zeros of $c_{N+1} (E)$ coincide with those of $\tilde{c}_{N+1} (E)$.
\end{proof}
In Lemma \ref{lem:dist} we obtained that if $d_n > 0 $ and $d'_{n+1} >0 $ for $n=1, \dots ,N$ and $p_{n} >0$ for $n=1, \dots ,N+1$, then the polynomial $c_{N+1} (E)$ has $N+1$ real distinct zeros.
We apply the lemma for the three term relation in Eq.(\ref{eq:rec}).
\begin{thm} \label{thm:SturmqHeun}
Assume that $N= -\lambda _1 -\alpha _1 $ is a non-negative integer, $0<q<1 $ and $t_1$, $t_2$, $h_1$, $h_2$, $l_1$, $l_2$, $\alpha _1$, $\alpha _2$, $\beta  $ are all real.
If one of the following conditions is satisfied, then the equation $c_{N+1} (E)=0$ in Eq.(\ref{eq:cN+1}) has all its roots real and distinct.\\
(i) $t_1 t_2 >0$, $\alpha _2-\alpha _1<1 $ and $\beta < 1$.\\
(ii) $t_1 t_2 >0$, $\alpha _2-\alpha _1>N $ and $ \beta > N$.\\
(iii) $t_1 t_2 <0$, $ \alpha _2-\alpha _1>N $ and $\beta < 1$.\\
(iv) $t_1 t_2 <0$, $ \alpha _2-\alpha _1 <1 $ and $\beta > N$.
\end{thm}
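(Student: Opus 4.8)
The plan is to recognize the three-term relation (\ref{eq:rec}) as a special instance of the abstract recursion (\ref{eq:lem3termrel}) and then to invoke Lemma \ref{lem:dist}. Comparing the two, I would read off the coefficients
\begin{align}
& d_n = t_1 t_2 q^{h_1+h_2}(1-q^n)(1-q^{n-\beta}), \quad p_n = q^{n-1+\lambda_1}, \nonumber \\
& d'_n = q(1-q^{n-2+\lambda_1+\alpha_1})(1-q^{n-2+\lambda_1+\alpha_2}), \nonumber
\end{align}
together with the real constant $q_n = q^{1/2}(q^{h_1}t_1 + q^{h_2}t_2) + (q^{l_1}t_1 + q^{l_2}t_2)q^{2(n-1+\lambda_1)+\alpha_1+\alpha_2-1/2}$. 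Since $0<q<1$ and all the parameters are real, $p_n = q^{n-1+\lambda_1}>0$ for every $n$ and $q_n \in \Rea$, so two of the three sign hypotheses of Lemma \ref{lem:dist} hold automatically and the whole problem reduces to controlling the signs of $d_n$ and $d'_n$.

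Next I would use the defining relation $N = -\lambda_1-\alpha_1$, i.e.\ $\lambda_1+\alpha_1 = -N$, to simplify the exponents in $d'_n$. Then $n-2+\lambda_1+\alpha_1 = n-2-N$, which is negative for $n=2,\dots,N+1$, so the first factor $1-q^{n-2-N}$ is always negative; and $n-2+\lambda_1+\alpha_2 = (n-2-N)+(\alpha_2-\alpha_1)$, whose sign is positive for all $n$ in this range when $\alpha_2-\alpha_1>N$ and negative for all such $n$ when $\alpha_2-\alpha_1<1$. Hence $d'_n>0$ for $n=2,\dots,N+1$ when $\alpha_2-\alpha_1<1$, and $d'_n<0$ for $n=2,\dots,N+1$ when $\alpha_2-\alpha_1>N$. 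In the same way, for $n=1,\dots,N$ one has $1-q^n>0$, while $1-q^{n-\beta}$ is positive for all such $n$ when $\beta<1$ and negative for all such $n$ when $\beta>N$; combined with the sign of $t_1 t_2$, this pins down the sign of $d_n$.

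Putting these together, in cases (i) and (iv) one finds $d_n>0$ for $n=1,\dots,N$, $d'_n>0$ for $n=2,\dots,N+1$, and $p_n>0$, so Lemma \ref{lem:dist} applies verbatim and $c_{N+1}(E)$ has $N+1$ real distinct zeros. In cases (ii) and (iii) the sign pattern is instead $d_n<0$ and $d'_n<0$ (still with $p_n>0$), so the lemma cannot be applied directly. Here I would normalize by the leading sign: since the leading coefficient of $c_n$ equals $\prod_{k=1}^{n}p_k/d_k$ and each $p_k/d_k<0$, I would set $\tilde c_n = (-1)^n c_n$. A direct substitution, followed by multiplying each relation by $-1$, shows that $\tilde c_n$ satisfies a recursion of the form (\ref{eq:lem3termrel}) with coefficients $-d_n>0$, $p_n>0$ and $-d'_n>0$; Lemma \ref{lem:dist} then applies to $\tilde c_n$, and since $\tilde c_{N+1}=\pm c_{N+1}$ its zeros coincide with those of $c_{N+1}$, which are therefore real and distinct.

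The step I expect to require the most care is the top index $n=N+1$, and specifically the degenerate case $d_{N+1}=0$ (i.e.\ $\beta=N+1$, which is allowed within the hypotheses of case (ii)), where $c_{N+1}$ is defined by the separate formula (\ref{eq:recN+1=0}) rather than by dividing through by $d_{N+1}$. Because the lemma places no sign requirement on $d_{N+1}$ itself, the generic sub-cases $d_{N+1}\neq 0$ cause no trouble; but in the $d_{N+1}=0$ sub-case the renormalizing sign attached to $c_{N+1}$ must be chosen so that the leading coefficient of $\tilde c_{N+1}$ stays positive and so that the special-case hypotheses of Lemma \ref{lem:dist} (namely $p_{N+1}>0$ and, after renormalization, $d'_{N+1}>0$) are met. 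Tracking this boundary sign correctly, rather than blindly using the factor $(-1)^{N+1}$, is the only genuinely delicate point; everything else is the routine sign bookkeeping described above.
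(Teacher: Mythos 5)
Your proposal is correct and follows essentially the same route as the paper: the paper's (very terse) proof likewise applies Lemma \ref{lem:dist} to Eqs.\ (\ref{eq:rec}), (\ref{eq:recN+1}), (\ref{eq:recN+1=0}), verifies the sign hypotheses directly for (i) and (iv), and disposes of (ii) and (iii) ``by multiplying $-1$,'' which is exactly your renormalization $\tilde c_n=(-1)^n c_n$ turning $d_n,d'_n<0$ into $-d_n,-d'_n>0$. Your sign bookkeeping (including the $d_{N+1}=0$ boundary case $\beta=N+1$, which only changes $c_{N+1}$ by an overall sign and hence not its zeros) is a more explicit rendering of the same argument.
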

\begin{proof}
The theorem is shown by applying Lemma \ref{lem:dist} to Eqs.(\ref{eq:rec}), (\ref{eq:recN+1}) and (\ref{eq:recN+1=0}).
As for (i), it follows from the condition of (i) that $n + \lambda _1 +\alpha _2 <1 $ for $n=1,2,\dots ,N$ and $n-\beta >0$ for $n=1,2,\dots $.
Hence the assumption of Lemma \ref{lem:dist} is confirmed.
(iv) is shown similarly.
(ii) and (iii) follows from the lemma by multiplying $-1$.
\end{proof}

\section{Analysis of the spectral polynomial by the ultradiscrete limit} \label{sec:UDL}

In the previous sections, it was shown that the condition for the eigenvalue $E$ such that $q$-Heun equation admits a non-zero polynomial solution is described by the roots of the spectral polynomial $c_{N+1} (E)$.
However it is not possible to solve the algebraic equation $c_{N+1} (E)=0 $ explicitly.
In this section, we investigate the solution of $c_{N+1} (E)=0 $ by the ultradiscrete limit $q\to +0$.
Detailed properties on convergence will be discussed in the appendix.

We define the equivalence of functions of the variable $q$ by 
\begin{equation}
a(q) \sim b (q) \; \Leftrightarrow \; \lim _{q \to +0} \frac{a(q)}{b(q)} =1.
\end{equation}
We also define the equivalence $\sum _{j=0}^{M} a_j(q) E^j \sim \sum _{j=0}^{M} b_j (q) E^j $ of the polynomials of the variable $E$ by $ a_j (q) \sim b _j(q) $ for $j=0,\dots ,M$.

We investigate solutions of $q$-Heun equation in the form 
\begin{equation}
f(x)= x^{\lambda _1} \sum _{n=0}^{N } c_n (E)  x^n ,
\end{equation}
where $\lambda _1 = (h_1 +h_2 -l_1-l_2 -\alpha _1-\alpha _2 -\beta +2)/2 $ under the condition of Theorem \ref{thm:SturmqHeun} (i).
For simplicity, we assume that $t_1>0 , \; t_2>0, \; h_1 <h_2 $ and $l_1<l_2.$
From now on, we assume that 
\begin{equation}
N= -\lambda _1 -\alpha _1 \in \Zint _{\geq 0}, \; \beta <1, \; \alpha _2-\alpha _1<1, \; t_1>0 , \; t_2>0, \; h_1 <h_2 , \; l_1<l_2.
\label{eq:assump}
\end{equation} 
Recall that the polynomials $c_{n} (E)$ $(n=1,2,\dots ,N)$ are determined recursive by Eq.(\ref{eq:rec}), i.e. 
\begin{align}
& c_n (E) t_1 t_2 [ q^{h_1 +h_2 } ( 1 - q^{n }) ( 1 - q^{n -\beta }) ] \label{eq:rec01}  \\
& = c_{n-1} (E)  [E q^{n -1 +\lambda _1} + q^{1/2}(q^{h_1 } t_1 +q^{h_2 } t_2 ) +  (q^{l_1 } t_1 +q^{l_2 } t_2 ) q^{2(n -1 +\lambda _1) +\alpha _1 +\alpha _2 -1/2 }]  \nonumber \\
& - c_{n-2} (E) [ q (1 - q^{n -2 +\lambda _1 +\alpha _1})(1 - q^{n-2 +\lambda _1 + \alpha _2}) ] , \nonumber 
\end{align}
with the initial condition $c_0(E)=1$ and $c_{-1} (E)=0$.
The spectral polynomial $c_{N+1} (E)$ is determined by setting $n=N+1$ in Eq.(\ref{eq:rec01}).
As $q\to +0$,
\begin{align}
& ( 1 - q^{n }) ( 1 - q^{n- \beta }) \sim 1 ,  \; (n=1,2,\dots ) \\
& q^{-h_1 -h_2 +1}  (1 - q^{n -2 +\lambda _1 +\alpha _1})(1 - q^{n-2 +\lambda _1 + \alpha _2}) \sim q^{2n-1 -l_1-l_2 -\beta } , \; (n=2,3,\dots , N+1).\nonumber 
\end{align}
Combining with the condition $h_1 <h_2 $ and $l_1<l_2$, we have
\begin{align}
c_n (E) \sim & \: t_1^{-1} t_2^{-1} [  E q^{n -1 -h_1 -h_2 +\lambda _1} + t_1 q^{1/2 -h_2 } + t_1 q^{ 2n -1/2 -l_2 - \beta  }] c_{n-1} (E) \label{eq:rec00}   \\
&  - t_1^{-1} t_2^{-1} q^{2n-1 -l_1-l_2 -\beta } c_{n-2} (E) \nonumber
\end{align}
for $ n=1,2,\dots ,N+1$ under the assumption that there are no cancellation of the leading terms of the coefficients of $E^j$ $(j=0,1,\dots ,n-1)$ in the right hand side with respect to the limit $q \to +0$.

If $1 +h_2 -l_2 - \beta >0 $ (resp. $2N+1 +h_2 -l_2 -\beta <0$), then we have $q^{1/2 -h_2 }  + q^{ 2n -1/2 -l_2 - \beta  } \sim q^{2n -1/2 -l_2 - \beta  } $ (resp. $q^{1/2 -h_2 }  + q^{ 2n -1/2-l_2 - \beta } \sim q^{1/2 -h_2 } $) for $n=1,2, \dots , N+1$.
We investigate the behaviour of the spectral polynomial $c_{N+1}(E)$ and the associated solutions of $q$-Heun equation as $q\to +0$ for the case $1 +h_2 -l_2 - \beta >0 $ or  $2N+1 +h_2 -l_2 -\beta <0$. 

\subsection{The case $1 +h_2 -l_2 - \beta >0 $} $ $

If $1 +h_2 -l_2 - \beta >0 $, then we have
\begin{align}
& c_n (E) \sim t_1^{-1} t_2^{-1} (  E q^{n -1 -h_1 -h_2 +\lambda _1} + t_1 q^{1/2 -h_2 } ) c_{n-1} (E)  - t_1^{-1} t_2^{-1} q^{2n-1 -l_1-l_2 -\beta } c_{n-2} (E) \label{eq:rec0101}  
\end{align}
for $ n=1,2,\dots ,N+1$ under the assumption that there are no cancellation of the leading terms of the coefficients of $E^j$ $(j=0,1,\dots ,n-1)$ in the right hand side with respect to the limit $q \to +0$.
Since $c_0 (E) =1$ and $c_{-1} (E)=0$, we have $c_1 (E) \sim t_1^{-1} t_2^{-1} ( E q^{-h_1 -h_2 +\lambda _1} + t_1 q^{1/2 -h_2 } )$ and
\begin{align}
& c_2 (E) \sim t_1^{-2} t_2^{-2} ( E q^{1 -h_1 -h_2 +\lambda _1} + t_1 q^{1/2 -h_2 } )( E q^{-h_1 -h_2 +\lambda _1} + t_1 q^{1/2 -h_2 } ) - t_1^{-1} t_2^{-1} q^{3 -l_1-l_2 -\beta }.
\label{eq:c2Ev1}
\end{align}
If $1-2h_2 \neq 3-l_1 -l_2 -\beta $, i.e. $2+ 2 h_2  - l_1 - l_2 -\beta \neq 0 $, then there are no cancellation of the leading terms.
If $2+ 2 h_2  - l_1 - l_2 -\beta >0 $, then we may ignore the term $t_1^{-1} t_2^{-1} q^{3 -l_1-l_2 -\beta } $ and we have
\begin{align}
& c_2 (E) \sim t_1^{-2} t_2^{-2}  ( E q^{1 -h_1 -h_2 +\lambda _1} + t_1 q^{1/2 -h_2 }) ( E q^{-h_1 -h_2 +\lambda _1} + t_1 q^{1/2 -h_2 } ) .
\end{align}
Moreover we can obtain the following proposition;
\begin{prop} \label{prop:cncn-11}
If $1 +h_2 -l_2 - \beta >0 $ and $2+ 2 h_2  - l_1 - l_2 -\beta >0  $ then we have
\begin{align}
& c_n (E) \sim t_1^{-1} t_2^{-1} ( E q^{n -1 -h_1 -h_2 +\lambda _1} + t_1 q^{1/2 -h_2 } ) c_{n-1} (E) \label{eq:rec010101}  
\end{align}
for $n=1,2,\dots ,N+1$.
\end{prop}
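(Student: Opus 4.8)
The plan is to establish the closed product form
\begin{equation}
c_n(E) \sim t_1^{-n} t_2^{-n} \prod_{k=1}^{n} \bigl( E\, q^{k-1-h_1-h_2+\lambda _1} + t_1 q^{1/2-h_2} \bigr), \qquad n=0,1,\dots ,N+1,
\label{eq:plan-prod}
\end{equation}
by induction on $n$; the asserted relation Eq.(\ref{eq:rec010101}) is then exactly the statement that Eq.(\ref{eq:plan-prod}) for $n$ is obtained from that for $n-1$ by multiplying by the single linear factor. The cases $n=0,1$ are immediate from $c_0(E)=1$ and $c_{-1}(E)=0$, and $n=2$ has already been verified above. I first note that the hypothesis $1+h_2-l_2-\beta >0$ is precisely what licenses starting from Eq.(\ref{eq:rec0101}) rather than Eq.(\ref{eq:rec00}): it gives $(2n-1/2-l_2-\beta)-(1/2-h_2)=2(n-1)+1+h_2-l_2-\beta >0$ for every $n\geq 1$, so $q^{2n-1/2-l_2-\beta}$ is negligible against $q^{1/2-h_2}$. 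Since $\sim$ is defined coefficientwise, the argument is carried out by tracking, for each $j$, the leading $q$-exponent of the coefficient of $E^j$; the form Eq.(\ref{eq:plan-prod}) predicts this exponent to be $e_n(j)=(n-j)(1/2-h_2)+j(j-1)/2+j(\lambda _1-h_1-h_2)$, the dominant contribution coming from the unique choice of $E$-factors indexed by $\{1,\dots ,j\}$.

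The heart of the induction is an exponent comparison among the three contributions to the coefficient of $E^j$ in Eq.(\ref{eq:rec0101}). Splitting the factor $(E\,q^{n-1-h_1-h_2+\lambda _1}+t_1 q^{1/2-h_2})$ acting on $c_{n-1}(E)$ into its $E$-part and its constant part and invoking the inductive hypothesis for $c_{n-1}$, a direct computation gives that the constant-part contribution has exponent exactly $e_n(j)$, while the $E$-part contribution has exponent $e_n(j)+(n-j)$. Hence for $j<n$ the constant part strictly dominates, and for $j=n$ the constant-part contribution vanishes identically (the coefficient of $E^n$ in $c_{n-1}$ is zero by degree), so the $E$-part supplies the top coefficient; in neither case is there leading-order cancellation. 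The remaining contribution, that of $c_{n-2}(E)$, carries the extra factor $q^{2n-1-l_1-l_2-\beta}$, and the inductive hypothesis for $c_{n-2}$ shows its exponent exceeds $e_n(j)$ by exactly $2(n-1)+2h_2-l_1-l_2-\beta$. For $n\geq 2$ this is $\geq 2+2h_2-l_1-l_2-\beta$, which is $>0$ by the second hypothesis, so the $c_{n-2}$ term is uniformly negligible. One checks in the same way that the product on the right of Eq.(\ref{eq:plan-prod}) obeys the identical coefficientwise recursion, so the matching of the nonzero leading constants, not merely of the exponents, propagates through the induction and closes it.

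The main obstacle is the genuine discharge of the \emph{no cancellation of leading terms} caveat attached to Eq.(\ref{eq:rec0101}): because $\sim$ demands true asymptotic equivalence, a coincidence of leading $q$-exponents between two contributions of opposite sign could in principle annihilate the predicted leading term and invalidate the naive term-by-term limit. The computation above resolves this by showing that the only coincidence of exponents occurs at $j=n$, between the constant part and the $E$-part of the $c_{n-1}$ contribution, and there the constant part is identically zero for degree reasons; at every other $j$ the three exponents are strictly separated. The monotonicity in $n$ of the gap $2(n-1)+2h_2-l_1-l_2-\beta$ then reduces the verification that $c_{n-2}$ is negligible to its worst case $n=2$, which is exactly the hypothesis $2+2h_2-l_1-l_2-\beta>0$ already invoked in the computation of $c_2(E)$ above. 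Assembling these estimates yields Eq.(\ref{eq:plan-prod}), and hence Eq.(\ref{eq:rec010101}), for all $n=1,\dots ,N+1$.
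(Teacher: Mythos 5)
Your proof is correct and follows essentially the same route as the paper's: an induction on $n$ that substitutes the inductive hypothesis into the three-term recursion and discards the $c_{n-2}$ term via the comparison $1-2h_2 < 2n-1-l_1-l_2-\beta$ for $n\geq 2$, which is exactly the hypothesis $2+2h_2-l_1-l_2-\beta>0$. Your version is somewhat more careful than the paper's in that it tracks the leading $q$-exponent of each coefficient of $E^j$ separately and thereby discharges the no-cancellation caveat explicitly, but the underlying argument is the same.
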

\begin{proof}
Let $k \in \{ 1,2,\dots ,N\}$ and assume that Eq.(\ref{eq:rec010101}) holds for $n\leq k $.
Then we have
\begin{align}
& t_1^{-1} t_2^{-1} ( E q^{k -h_1 -h_2 +\lambda _1} + t_1 q^{1/2 -h_2 } ) c_{k} (E)  - t_1^{-1} t_2^{-1} q^{2k +1 -l_1-l_2 -\beta } c_{k-1 } (E) \\
&  \sim  \{ t_1^{-2} t_2^{-2} (  E q^{k -h_1 -h_2 +\lambda _1} + t_1 q^{1/2 -h_2 })( E q^{k -h_1 -h_2 +\lambda _1} + t_1 q^{1/2 -h_2 } ) \nonumber \\
& \qquad - t_1^{-1} t_2^{-1} q^{2k +1 -l_1-l_2 -\beta } ]\} c_{k-1 } (E) . \nonumber 
\end{align}
Since $1-2h_2 < 3-l_1 -l_2 -\beta \leq 2k +1 -l_1-l_2 -\beta $, we may neglect the term $t_1^{-1} t_2^{-1} q^{2k +1 -l_1-l_2 -\beta } c_{k-1 } (E)  $ and we have  Eq.(\ref{eq:rec010101}) for $n= k +1$.
\end{proof}
\begin{thm} \label{thm:specpol1}
We assume Eq.(\ref{eq:assump}), $1 +h_2 -l_2 - \beta >0 $ and $2+ 2 h_2  - l_1 - l_2 -\beta >0  $.\\
(i) The spectral polynomial $c_{N+1} (E)$ satisfies
\begin{align}
 c_{N+1} (E) \sim & (t_1 t_2 )^{-N-1} q^{(N/2 + \lambda _1 -h_1 -h_2)(N+1) } ( E + q^{1/2 -N +h_1 -\lambda _1} t_1 )( E + q^{3/2 -N +h_1 -\lambda _1} t_1 ) \label{eq:specpoly01} \\
& \cdots ( E + q^{-1/2 +h_1 -\lambda _1 } t_1 )( E + q^{1/2 +h_1 -\lambda _1 } t_1 ) . \nonumber 
\end{align}
(ii) There exist solutions $E_k (q)$ $(k=1,2,\dots ,N+1)$ to the equation $c_{N+1} (E) =0 $ for sufficiently small $q$ such that
\begin{align}
& E _k(q)  \sim -q^{-k + 3/2 +h_1 -\lambda _1  } t_1 .
\label{eq:Ekqcase1}
\end{align}
\end{thm}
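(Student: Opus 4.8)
The plan is to handle the two parts in sequence: part (i) follows by iterating the one-step equivalence of Proposition \ref{prop:cncn-11} and doing the exponent bookkeeping, while part (ii) requires upgrading this coefficient-wise asymptotic of the polynomial to an asymptotic statement about its roots, which is where the appendix machinery enters.

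For part (i), I would iterate
\begin{equation*}
c_n(E) \sim t_1^{-1} t_2^{-1} \bigl( E q^{n-1-h_1-h_2+\lambda_1} + t_1 q^{1/2-h_2} \bigr) c_{n-1}(E)
\end{equation*}
from Proposition \ref{prop:cncn-11}, starting from $c_0(E)=1$, to obtain $c_{N+1}(E) \sim \prod_{n=1}^{N+1} t_1^{-1} t_2^{-1} ( E q^{n-1-h_1-h_2+\lambda_1} + t_1 q^{1/2-h_2} )$. Factoring $q^{n-1-h_1-h_2+\lambda_1}$ out of the $n$-th factor turns it into $t_1^{-1} t_2^{-1} q^{n-1-h_1-h_2+\lambda_1} ( E + t_1 q^{3/2-n+h_1-\lambda_1} )$. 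Collecting the prefactors, the $t$-dependence assembles into $(t_1 t_2)^{-N-1}$ and the accumulated $q$-exponent is $\sum_{n=1}^{N+1}(n-1-h_1-h_2+\lambda_1) = (N+1)(N/2+\lambda_1-h_1-h_2)$, exactly the exponent in Eq.(\ref{eq:specpoly01}). The linear factors $( E + t_1 q^{3/2-n+h_1-\lambda_1} )$ for $n=1,\dots,N+1$ reproduce, after reindexing, the displayed product (the case $n=1$ giving the last factor and $n=N+1$ the first), so (i) is a matter of routine verification.

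For part (ii), the key observation is that the leading polynomial in (i) already splits into $N+1$ linear factors whose roots $-t_1 q^{3/2-n+h_1-\lambda_1}$ lie at pairwise distinct orders in $q$, the exponents $3/2-n+h_1-\lambda_1$ decreasing by one as $n$ increases. Since coefficient-wise equivalence $\sim$ does not transfer to the roots automatically, I would invoke the appendix theorems on the ultradiscrete limit of an algebraic equation and continuation of solutions: because the Newton polygon of $c_{N+1}(E)$ carries $N+1$ distinct slopes, one per root-scale, each genuine root $E_k(q)$ of $c_{N+1}(E)=0$ is pinned to exactly one leading root and must satisfy $E_k(q) \sim -t_1 q^{3/2-k+h_1-\lambda_1}$, which is Eq.(\ref{eq:Ekqcase1}) after setting $n=k$.

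The main obstacle is precisely this last passage from coefficient asymptotics to root asymptotics. The combinatorial input—distinctness of the $q$-orders of the leading roots—is immediate, but one must check that the hypotheses of the appendix continuation theorem genuinely hold: that no two scales collide and that the relevant leading coefficients are nonzero, so that the Newton-polygon/implicit-function argument applies uniformly for all sufficiently small $q$. Here Theorem \ref{thm:SturmqHeun}(i) is a useful complement, since it guarantees that all $N+1$ roots of $c_{N+1}(E)$ are real and distinct; this matches the $N+1$ distinct scales produced above and confirms that no root is lost or created in the limit $q \to +0$.
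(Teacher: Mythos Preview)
Your proposal is correct and matches the paper's approach: part (i) is obtained by iterating Proposition \ref{prop:cncn-11} and collecting exponents exactly as you describe, and part (ii) is deduced from (i) via the appendix result Corollary \ref{cor:sol}, whose hypothesis of strictly ordered exponents is precisely your observation that the $N+1$ scales $3/2-k+h_1-\lambda_1$ differ by integers. Your invocation of Theorem \ref{thm:SturmqHeun}(i) is an extra consistency check the paper does not make, but it is harmless and not needed for the argument.
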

\begin{proof}
We obtain (i) by applying Proposition \ref{prop:cncn-11} repeatedly.
(ii) follows from Corollary \ref{cor:sol}.
\end{proof}
\begin{remk}
If  $2+ 2 h_2  - l_1 - l_2 -\beta <0  $, then the zeros of the polynomials $c_n(E)$ have a different feature.
It follows from Eq.(\ref{eq:c2Ev1}) and the assumption $2 h_2  - l_1 - l_2 -\beta <-2 $ that 
\begin{align}
& c_2 (E) \sim t_1^{-2} t_2^{-2} ( q^{1 -2h_1 -2h_2 +2 \lambda _1} E^2 + t_1 q^{1/2 -h_2 } q^{-h_1 -h_2 +\lambda _1} E  - t_1 t_2 q^{3 -l_1-l_2 -\beta } ) .
\end{align}
The spectral polynomial $c_{N+1} (E)$ would be different from Eq.(\ref{eq:specpoly01}).
See \cite{Koj,Sat} for details.
\end{remk}

We investigate the polynomial solution of $q$-Heun equation for the case $1 +h_2 -l_2 - \beta >0 $  and $2+ 2 h_2  - l_1 - l_2 -\beta >0  $.
Then the spectral polynomial $c_{N+1 } (E) =0$ has solutions which satisfy Eq.(\ref{eq:Ekqcase1}).
Write the normalized polynomial solution of the $q$-Heun equation as 
\begin{equation}
x^{\lambda _1} \sum _{n=0}^{N } c_n (E_k )  x^n ,
\label{eq:solqHEj1}
\end{equation}
where $\lambda _1 = (h_1 +h_2 -l_1-l_2 -\alpha _1-\alpha _2 -\beta +2)/2 $, $c_0 (E_k )=1$ and $E_k$ is an abbreviation of $E_k (q)$.
\begin{thm} \label{thm:eigenf1}
Let $k\in \{ 1,2,\dots ,N+1 \}$.
Assume Eq.(\ref{eq:assump}), $1 +h_2 -l_2 - \beta >0 $, $2+ 2 h_2  - l_1 - l_2 -\beta >0  $ and the value $E=E_k $ is a solution of the characteristic equation $c_{N+1} (E)=0$ such that  $E_k  \sim -q^{3/2 -k +h_1 -\lambda _1} t_1 $.\\
(i) The coefficients of the normalized polynomial solution in Eq.(\ref{eq:solqHEj1}) satisfy
\begin{align}
& c_n (E_k) \sim  - q^{n -k +1/2 -h_2 } t_2^{-1} c_{n-1} (E_k), & 1 \leq n \leq k-1, \\
& c_{n} (E_k) \sim  q^{2n +1/2 +h_2 -l_1 -l_2 -\beta } t_1^{-1} c_{n-1} (E_k), & k \leq n \leq N . \nonumber
\end{align}
(ii) We have
\begin{align}
& \sum _{n=0}^{N } c_n (E_k)  x^n \sim \prod _{j=1}^{k-1} ( 1-  q^{j -k +1/2 -h_2 } t_2^{-1} x ) \prod _{j=k}^N (1 +  q^{2j +1/2 +h_2 -l_1 -l_2 -\beta } t_1^{-1} x).
\end{align}
(iii) There exists $q_j \in \Rea _{>0}$ for $j=1,2,\dots ,M$ such that the polynomial $\sum _{n=0}^{N } c_n (E_k)  x^n  $ has a zero $x= x_j(q) $ for $0<q<q_j$ which is continuous on $q$ and satisfies 
\begin{align}
& \lim _{q \to +0} \frac{x _j (q)}{q^{j -1/2 +h_2 }t_2 } =1 , \; (j=1,\dots ,k-1), \\
& \lim _{q \to +0} \frac{x _j (q)}{- q^{-2j -1/2 -h_2 +l_1 +l_2 +\beta }t_1 } =1 ,\; (j=k , \dots ,N). \nonumber
\end{align}
\end{thm}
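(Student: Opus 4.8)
The plan is to substitute the chosen root $E=E_k\sim -q^{3/2-k+h_1-\lambda_1}t_1$ into the exact three--term recursion Eq.(\ref{eq:rec01}) and to follow the leading $q$--behaviour of the numbers $c_n(E_k)$ by induction on $n$. The governing quantity is the bracket multiplying $c_{n-1}$, whose two competing pieces behave like $-t_1q^{n-k+1/2-h_2}$ (coming from $E_k q^{n-1+\lambda_1}$) and $t_1 q^{1/2-h_2}$ (coming from $q^{1/2}q^{h_1}t_1$); under $1+h_2-l_2-\beta>0$ together with $h_1<h_2$ and $l_1<l_2$ the remaining parts of the bracket are subdominant. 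Since the sign of $n-k$ decides which term wins, $n=k$ is a turning point and splits the argument into the two regimes that correspond to the two lines of (i).

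For $1\le n\le k-1$ I would argue that, because $n-k<0$, the term $-t_1q^{n-k+1/2-h_2}$ dominates the bracket while the retained $c_{n-2}$--term in Eq.(\ref{eq:rec01}) is of strictly higher order in $q$; a short induction then yields $c_n(E_k)\sim -q^{n-k+1/2-h_2}t_2^{-1}c_{n-1}(E_k)$, the first line of (i). Conceptually this is the evaluation of the factorisation obtained by iterating Proposition \ref{prop:cncn-11} at a point $E_k$ whose scale exceeds that of every root of $c_n(E)$ for $n\le k-1$, so no cancellation occurs and the estimate is clean.

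The delicate part is the regime $k\le n\le N$, which I expect to be the main obstacle. At $n=k$ the two leading bracket terms share the order $q^{1/2-h_2}$ but have opposite signs, so they cancel to leading order; equivalently, the factorisation from Proposition \ref{prop:cncn-11} is no longer valid at $E=E_k$, precisely because $E_k$ sits at the leading approximation of a root of the relevant linear factor, and the $c_{n-2}$--term can no longer be discarded. To make this rigorous I would keep the $c_{n-2}$--term in Eq.(\ref{eq:rec01}) (that is, use Eq.(\ref{eq:rec0101}) rather than the simplified Proposition \ref{prop:cncn-11}), expand the coefficients of $c_{n-1}$ and $c_{n-2}$ to the next order in $q$, and feed in the inductively known ratio $c_{n-2}(E_k)/c_{n-1}(E_k)$. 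The residue of the cancellation should then produce the ratio $c_n(E_k)\sim q^{2n+1/2+h_2-l_1-l_2-\beta}t_1^{-1}c_{n-1}(E_k)$ asserted in the second line of (i). Showing that this residual has exactly the stated order with a non--vanishing leading coefficient, and that the next--order correction of $E_k$ to its leading value $-q^{3/2-k+h_1-\lambda_1}t_1$ does not interfere, is the crux; the hypothesis $2+2h_2-l_1-l_2-\beta>0$ is what guarantees that the various scales separate and that no further coincidental cancellation occurs.

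Once (i) is established, (ii) follows by telescoping. Setting $r_j=-q^{j-k+1/2-h_2}t_2^{-1}$ for $1\le j\le k-1$ and $r_j=q^{2j+1/2+h_2-l_1-l_2-\beta}t_1^{-1}$ for $k\le j\le N$, part (i) gives $c_n(E_k)\sim r_1 r_2\cdots r_n$, while the right--hand side of (ii) is $\prod_{j=1}^{N}(1+r_j x)$, whose coefficient of $x^n$ is the elementary symmetric function $e_n(r_1,\dots,r_N)$. Under Eq.(\ref{eq:assump}) together with $2+2h_2-l_1-l_2-\beta>0$ the exponents of the $r_j$ are strictly increasing in $j$, so the scales are totally ordered and $e_n\sim r_1\cdots r_n$; matching coefficients of $x^n$ yields (ii). Finally, for (iii) I would invoke the continuation result of the appendix (Corollary \ref{cor:sol} and the theorems preceding it): the factorisation (ii) shows that, after rescaling near each zero $-1/r_j$, the polynomial $\sum_{n=0}^{N}c_n(E_k)x^n$ in Eq.(\ref{eq:solqHEj1}) converges to a polynomial with a simple root, so for small $q$ there is a genuine zero $x_j(q)$, continuous in $q$, with the asserted asymptotics. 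The relabelling $j\mapsto k-j$ in the first regime gives $x_j(q)\sim q^{j-1/2+h_2}t_2$, while the second regime gives $x_j(q)\sim -q^{-2j-1/2-h_2+l_1+l_2+\beta}t_1$, and together these exhaust the $N$ zeros of the degree--$N$ polynomial.
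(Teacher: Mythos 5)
Your treatment of the regime $1\le n\le k-1$ in part (i), and of parts (ii) and (iii) via the elementary symmetric functions and the continuation results of the appendix (Theorem \ref{thm:zerosasymp}), matches the paper. The genuine gap is in the regime $k\le n\le N$, which you yourself flag as the crux but do not actually resolve. Your plan is to stay with the forward recursion, expand the bracket to the next order in $q$, and argue that the residual of the cancellation at $n=k$ produces the stated ratio while ``the next--order correction of $E_k$ does not interfere.'' This cannot work as stated: the hypothesis only gives the leading behaviour $E_k\sim -q^{3/2-k+h_1-\lambda_1}t_1$, so at $n=k$ the combination $E_kq^{k-1+\lambda_1}+t_1q^{1/2+h_1}$ is merely $o(q^{1/2+h_1})$ with an undetermined coefficient. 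The subleading correction to $E_k$ is exactly what encodes the global constraint $c_{N+1}(E_k)=0$, so it is not ``non-interfering'' --- it is the whole content of the step, and a purely local, forward computation at $n=k$ cannot determine $c_k(E_k)/c_{k-1}(E_k)$.

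The paper's key idea, which is missing from your proposal, is to exploit the terminal condition $c_{N+1}(E_k)=0$ directly by reversing the recursion: solving the three--term relation for $c_{n-2}$ gives
\begin{align*}
c_{n-2}(E_k)\sim t_1 q^{3/2-2n-h_2+l_1+l_2+\beta}c_{n-1}(E_k)-t_1t_2q^{-2n+1+l_1+l_2+\beta}c_n(E_k),
\end{align*}
valid for $k+1\le n\le N+1$, and the case $n=N+1$ (where $c_{N+1}(E_k)=0$) starts a downward induction yielding $c_n(E_k)\sim t_1^{-1}q^{1/2+2n+h_2-l_1-l_2-\beta}c_{n-1}(E_k)$ for $k\le n\le N$. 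In this backward direction no cancellation of leading terms occurs (the inequality $2+2h_2-l_1-l_2-\beta>0$ shows the $c_n$--term is negligible against the $c_{n-1}$--term), so the only input needed about $E_k$ is its leading order. You should replace your next--order expansion by this downward induction; without it, part (i) for $k\le n\le N$, and hence (ii) and (iii), are not established.
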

\begin{proof}
Since Eq.(\ref{eq:solqHEj1}) is a solution of $q$-Heun equation, the coefficients $c_n (E_k) $ satisfies
\begin{align}
 c_n (E_k) \sim & t_1^{-1} t_2^{-1} ( -t_1 q^{3/2 -k +h_1 -\lambda _1} q^{n -1 -h_1 -h_2 +\lambda _1} + t_1 q^{1/2 -h_2 } ) c_{n-1} (E_k) , \label{eq:rec0101Ej} \\
&  - t_1^{-1} t_2^{-1} q^{2n-1 -l_1-l_2 -\beta } c_{n-2} (E_k) . \nonumber  
\end{align}
for $n =1, \dots ,N+1$, if the leading terms on the right hand side are not cancelled.
Note that $c_{-1} (E_k)=0$, $c_{N+1}(E_k)=0$ and $c_{0} (E_k)=1 $. 
If $1 \leq n \leq k-1$, then the term $t_2 ^{-1} q^{1/2 -h_2 } c_{n-1} (E_k)$ does not affect the leading term and the right hand side of Eq.(\ref{eq:rec0101Ej}) is equivalent to
\begin{align}
 -t_2^{-1} q^{1/2 +n -k  -h_2 } c_{n-1} (E_k) - t_1^{-1} t_2^{-1} q^{2n-1 -l_1-l_2 -\beta } c_{n-2} (E_k) .  
\end{align}
We now show that $c_n (E_k) \sim  -t_2^{-1} q^{1/2 -h_2  + n -k }  c_{n-1} (E_k)$ for $1 \leq n \leq k-1$ and $k\geq 2$.
It follows from Eq.(\ref{eq:rec0101Ej}) for the case $n=1$ that $c_1 (E_k) \sim  -t_2^{-1} q^{3/2  -k  -h_2 } $.
If $c_{n-1} (E_k) \sim  -t_2^{-1} q^{1/2 -h_2  + n -1 -k }  c_{n-2} (E_k)$, then it follows from $2+ 2 h_2  - l_1 - l_2 -\beta >0 $ that the term $q^{1/2 +n -k  -h_2 }  q^{1/2 -h_2  + n -1 -k }$ is stronger than $ q^{2n-1 -l_1-l_2 -\beta } $.
Hence we may ignore the term $t_1^{-1} t_2^{-1} q^{2n-1 -l_1-l_2 -\beta } c_{n-2} (E_k) $, the leading term of the right hand side is contained in $-t_2^{-1} q^{1/2 -h_2  + n -k }  c_{n-1} (E_k) $  and we have  $c_n (E_k) \sim  -t_2^{-1} q^{1/2 -h_2  + n -k }  c_{n-1} (E_k)$ for $1 \leq n \leq k-1$.

We show that $c_{n} (E_k) \sim t_1^{-1} q^{1/2 +2n +h_2 -l_1 -l_2 -\beta } c_{n-1} (E_k)$ for $k \leq n \leq N$.
It follows from the three term relation that
\begin{align}
&  c_{n-2} (E_k)  \sim t_1 q^{3/2 -2n -h_2 +l_1 +l_2 +\beta } c_{n-1} (E_k)  -  t_1 t_2 q^{-2n +1 +l_1 +l_2 +\beta }c_n (E_k) 
\label{eq:reccn-210}
\end{align}
for $k+1 \leq n \leq N+1$, if the leading terms on the right hand side are not cancelled.
In the case $n=N+1$, we have $c_{N} (E_k) \sim t_1^{-1} q^{1/2 +2N +h_2 -l_1 -l_2 -\beta } c_{N-1} (E_k) $ by Eq.(\ref{eq:reccn-210}).
We assume that $c_{n} (E_k) \sim t_1^{-1} q^{1/2 +2n +h_2 -l_1 -l_2 -\beta } c_{n-1} (E_k) $ for some $n$ such that $k+1 \leq n \leq N $.
Since $q^{3/2 -2n -h_2 +l_1 +l_2 +\beta } q^{1/2 +2n +h_2 -l_1 -l_2 -\beta }$ is stronger than $  q^{-2n +1 +l_1 +l_2 +\beta }$, the term $ t_1 t_2 q^{-2n +1 +l_1 +l_2 +\beta }c_n (E_k) $ in the right hand side of Eq.(\ref{eq:reccn-210}) is negligible and we have $c_{n-1} (E_k) \sim  t_1^{-1} q^{-3/2 +2n +h_2 -l_1 -l_2 -\beta } c_{n-2} (E_k)$.
Thus we have shown $c_{n} (E_k) \sim t_1^{-1} q^{1/2 +2n +h_2 -l_1 -l_2 -\beta } c_{n-1} (E_k)$ for $k \leq n \leq N$.
Therefore we obtain (i).

We show (ii).
Write
\begin{align}
& \prod _{j=1}^{N} ( 1+ s_j x ) = \sum _{n=0}^{N } d_n  x^n , \; s_j= \left\{ 
\begin{array}{ll}
t_2^{-1} q^{j -k + 1/2 -h_2  }, & j=1,\dots ,k -1 ,\\
- t_1^{-1} q^{2j + 1/2 +h_2 -l_1 -l_2 -\beta }, & j=k,\dots ,N. 
\end{array}
\right.
\end{align}

Then it follows from $-1/2 -h_2 < 3/2 +h_2 -l_1 -l_2 -\beta < 2k +1/2 +h_2 -l_1 -l_2 -\beta $ that $s_j$ is stronger than $s_{j+1}$ for $j=1,\dots ,N-1$.
Hence the assumption of Theorem \ref{thm:zerosasymp} is confirmed and we have $d_n \sim \prod _{j=1}^n s_j $ for $n=1,\dots ,N$. 
On the other hand, it follows from (i) that $c_{n+1} (E_k) \sim s_{n+1} c_n (E_k) $.
Combining with $d_0=1=c_{0} (E_k)$, we obtain (ii).

(iii) follows from Theorem \ref{thm:zerosasymp}.
\end{proof}

\subsection{The case $2N+1 +h_2 -l_2 -\beta <0 $} $ $

If $2N+1 +h_2 -l_2 -\beta <0 $, then 
\begin{align}
 c_n (E) \sim & t_1 ^{-1} t_2 ^{-1} ( E q^{n -1 -h_1 -h_2 +\lambda _1} + q^{2n -1/2 -l_2 - \beta  } t_1 ) c_{n-1} (E)  \label{eq:rec0102} \\
 &  - q^{2n-1 -l_1-l_2 -\beta } t_1 ^{-1} t_2 ^{-1} c_{n-2} (E) \nonumber
\end{align}
for $n=1,2,\dots ,N+1$ under the assumption of no cancellation as in the case $1 +h_2 -l_2 - \beta >0 $. 
Then we have $ c_1 (E) \sim t_1 ^{-1} t_2 ^{-1} ( E q^{ -h_1 -h_2 +\lambda _1} + q^{3/2 -l_2 - \beta  } t_1 ) $ and
\begin{align}
 c_2 (E) \sim & t_1 ^{-2} t_2 ^{-2} ( E q^{1 -h_1 -h_2 +\lambda _1} + q^{7/2 -l_2 - \beta  } t_1 )( E q^{ -h_1 -h_2 +\lambda _1} + q^{3/2 -l_2 - \beta  } t_1 ) \\
&  - q^{3 -l_1-l_2 -\beta } t_1 ^{-1} t_2 ^{-1} . \nonumber 
\end{align}
If  $2+ l_1 - l_2 -\beta <0 $, then we may ignore the term $t_1^{-1} t_2^{-1} q^{3 -l_1-l_2 -\beta } $ and we have
\begin{align}
& c_2 (E) \sim t_1 ^{-2} t_2 ^{-2} ( E q^{1 -h_1 -h_2 +\lambda _1} + q^{7/2 -l_2 - \beta  } t_1 )( E q^{ -h_1 -h_2 +\lambda _1} + q^{3/2 -l_2 - \beta  } t_1 ).
\end{align}
Moreover we can obtain the following proposition
\begin{prop} \label{prop:cncn-12}
Let $k \in \{ 1,2,\dots ,N \}$.
If $2N+1 +h_2 -l_2 -\beta <0  $ and $2k + l_1 - l_2 -\beta <0   $ then $c_n(E)$ satisfies
\begin{align}
& c_n (E) \sim t_1 ^{-1} t_2 ^{-1} ( E q^{n -1 -h_1 -h_2 +\lambda _1} + q^{2n -1/2 -l_2 - \beta  } t_1 ) c_{n-1} (E)
\end{align}
for $n =1,2,\dots , k+1$.
\end{prop}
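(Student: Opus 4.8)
The plan is to argue by induction on $n$, in exact parallel with the proof of Proposition~\ref{prop:cncn-11}, the only change being which of the two competing powers $q^{1/2-h_2}$ and $q^{2n-1/2-l_2-\beta}$ survives in the coefficient of $c_{n-1}(E)$. Recall that the reduction of the full recursion Eq.~(\ref{eq:rec00}) to Eq.~(\ref{eq:rec0102}) is where the standing hypothesis $2N+1+h_2-l_2-\beta<0$ is used, since it guarantees $q^{1/2-h_2}+q^{2n-1/2-l_2-\beta}\sim q^{2n-1/2-l_2-\beta}$ for $1\le n\le N+1$; so I would take Eq.~(\ref{eq:rec0102}) as the starting recursion. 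The base case $n=1$ is then immediate: from $c_{-1}(E)=0$ and $c_0(E)=1$, Eq.~(\ref{eq:rec0102}) gives $c_1(E)\sim t_1^{-1}t_2^{-1}(Eq^{-h_1-h_2+\lambda_1}+q^{3/2-l_2-\beta}t_1)$, which is the asserted identity for $n=1$.

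For the inductive step, suppose the asserted formula holds for some $m$ with $1\le m\le k$, and consider $n=m+1\le k+1$. Substituting the inductive hypothesis for $c_m(E)$ into Eq.~(\ref{eq:rec0102}) with $n=m+1$, I would rewrite $c_{m+1}(E)$ as $t_1^{-2}t_2^{-2}$ times
\[
\{(Eq^{m-h_1-h_2+\lambda_1}+t_1 q^{2m+3/2-l_2-\beta})(Eq^{m-1-h_1-h_2+\lambda_1}+t_1 q^{2m-1/2-l_2-\beta})-t_1 t_2\, q^{2m+1-l_1-l_2-\beta}\}\,c_{m-1}(E).
\]
The subtracted cross term modifies only the coefficient of $E^0$ inside the braces, which without it equals $t_1^2 q^{4m+1-2l_2-2\beta}$. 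Thus the cross term is negligible precisely when $4m+1-2l_2-2\beta<2m+1-l_1-l_2-\beta$, i.e.\ when $2m+l_1-l_2-\beta<0$.

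This is the one inequality carrying the hypothesis: since $1\le m\le k$ we have $2m+l_1-l_2-\beta\le 2k+l_1-l_2-\beta<0$, so the cross term may be dropped. The braces are then equivalent to the product of the two linear factors, and regrouping its second factor with $t_1^{-1}t_2^{-1}c_{m-1}(E)$ reproduces $c_m(E)$ by the inductive hypothesis. This yields $c_{m+1}(E)\sim t_1^{-1}t_2^{-1}(Eq^{m-h_1-h_2+\lambda_1}+t_1 q^{2(m+1)-1/2-l_2-\beta})c_m(E)$, which is exactly the claimed relation at $n=m+1$; the induction then runs up to $n=k+1$.

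The step to watch is the passage from coefficientwise equivalence of the bracketed degree-two polynomial to equivalence of $c_{m+1}(E)$ after multiplication by $c_{m-1}(E)$: as elsewhere in this subsection, this relies on the tacit assumption that no leading terms cancel in the resulting convolutions. Here this is benign, because $t_1,t_2>0$ forces the two terms of the $E^1$-coefficient to add rather than cancel, and the only subtracted contribution is the cross term already shown to be subdominant; the main obstacle is really just bookkeeping the exponents correctly so that the governing inequality comes out as $2m+l_1-l_2-\beta<0$ and matches the hypothesis $2k+l_1-l_2-\beta<0$ for the final step $m=k$.
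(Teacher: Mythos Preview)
Your proof is correct and follows essentially the same inductive approach as the paper's own proof: substitute the inductive hypothesis into Eq.~(\ref{eq:rec0102}), compare the constant term $t_1^2 q^{4m+1-2l_2-2\beta}$ of the product against the cross term $t_1 t_2 q^{2m+1-l_1-l_2-\beta}$, and use $2m+l_1-l_2-\beta\le 2k+l_1-l_2-\beta<0$ for $m\le k$ to discard the latter. The paper arranges the same inequality as a two-step chain $4n-3-2l_2-2\beta<4n-3-2k-l_1-l_2-\beta\le 2n-1-l_1-l_2-\beta$ (using the hypothesis for the first step and $n\le k+1$ for the second), but the content is identical.
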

\begin{proof}
Assume that $c_{n-1} (E) \sim t_1 ^{-1} t_2 ^{-1} ( E q^{n -2 -h_1 -h_2 +\lambda _1} + q^{2n -5/2 -l_2 - \beta  } t_1 ) c_{n-2} (E)$.
Then we have
\begin{align}
 c_n (E) \sim & \{ t_1 ^{-2} t_2 ^{-2} ( E q^{n -1 -h_1 -h_2 +\lambda _1} + q^{2n -1/2 -l_2 - \beta  } t_1 )( E q^{n -2 -h_1 -h_2 +\lambda _1} + q^{2n -5/2 -l_2 - \beta  } t_1 ) \\
& - q^{2n-1 -l_1-l_2 -\beta } t_1 ^{-1} t_2 ^{-1} \} c_{n-2} (E) . \nonumber
\end{align}
Then $ 4n -3 -2 l_2 - 2 \beta < 4n -3 - 2k - l_1 - l_2 - \beta \leq 2n -1 - l_1   - l_2 - \beta $ and we may ignore the term $q^{2n-1 -l_1-l_2 -\beta } t_1 ^{-1} t_2 ^{-1}  c_{n-2} (E) $.
\end{proof}
Therefore, if $2N+1 +h_2 -l_2 -\beta <0  $ and $2N + l_1 - l_2 -\beta <0 $, then $c_n(E)$ satisfies
\begin{align}
 c_n (E) & \sim q^{n -1 -h_1 -h_2 +\lambda _1} t_1 ^{-1} t_2 ^{-1} ( E  + t_1 q^{ n - 3/2 +\lambda _1 +l_1 + \alpha _1 + \alpha _2 } ) c_{n-1} (E) \label{eq:rec010201} 
\end{align}
for $n=1,2,\dots , N+1$.
As Theorem \ref{thm:specpol1} in the previous subsection, we obtain the following theorem;
\begin{thm} \label{thm:specpol2}
We assume Eq.(\ref{eq:assump}), $2N+1 +h_2 -l_2 -\beta <0 $ and $2N +l_1 -l_2 -\beta <0$.\\
(i) The spectral polynomial $c_{N+1} (E)$ satisfies
\begin{align}
& c_{N+1} (E) \sim \: (t_1 t_2 )^{-N-1} q^{(N/2 + \lambda _1 -h_1 -h_2)(N+1) } ( E + q^{N -1/2 +\lambda _1 +l_1 + \alpha _1 + \alpha _2 } t_1 )\\
& \quad ( E + q^{N- 3/2 +\lambda _1 +l_1 + \alpha _1 + \alpha _2 } t_1 )  \cdots ( E + q^{1/2 +\lambda _1 +l_1 + \alpha _1 + \alpha _2} t_1 )( E + q^{-1/2 +\lambda _1 +l_1 + \alpha _1 + \alpha _2 } t_1 ) . \nonumber 
\end{align}
(ii) There exist solutions $E_k (q)$ $(k=1,2,\dots ,N+1)$ to the equation $c_{N+1} (E) =0 $ for sufficiently small $q$ such that
\begin{align}
& E _k(q)  \sim -q^{k-3/2 + \lambda _1 + l_1 + \alpha _1 +\alpha _2 } t_1 .
\label{eq:Ekqcase2}
\end{align}
\end{thm}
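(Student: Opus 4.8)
The plan is to follow the template set by the proof of Theorem \ref{thm:specpol1}: use the collapsed recursion (\ref{eq:rec010201}) as the engine for part (i), and the appendix continuation result for part (ii). First I would observe that, under the two hypotheses $2N+1+h_2-l_2-\beta<0$ and $2N+l_1-l_2-\beta<0$, Proposition \ref{prop:cncn-12} applies with $k=N$ (its second condition $2k+l_1-l_2-\beta<0$ becomes exactly $2N+l_1-l_2-\beta<0$), so the single-factor recursion (\ref{eq:rec010201}) holds for every $n=1,\dots,N+1$. Starting from $c_0(E)=1$ and iterating it telescopes to a product $c_{N+1}(E)\sim\prod_{n=1}^{N+1}q^{\,n-1-h_1-h_2+\lambda_1}t_1^{-1}t_2^{-1}\bigl(E+t_1 q^{\,n-3/2+\lambda_1+l_1+\alpha_1+\alpha_2}\bigr)$. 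The bookkeeping is then routine: the coefficient contributes $(t_1t_2)^{-N-1}$; the total $q$-exponent is the arithmetic sum $\sum_{n=1}^{N+1}(n-1-h_1-h_2+\lambda_1)=(N/2+\lambda_1-h_1-h_2)(N+1)$; and reindexing the linear factors from $n=N+1$ down to $n=1$ reproduces the factorization asserted in (i). (In passing I would double-check, using the definition (\ref{eq:A4la1la2}) of $\lambda_1$, that pulling $q^{\,n-1-h_1-h_2+\lambda_1}$ out of $E\,q^{\,n-1-h_1-h_2+\lambda_1}+t_1 q^{\,2n-1/2-l_2-\beta}$ does indeed leave the constant-term exponent $n-3/2+\lambda_1+l_1+\alpha_1+\alpha_2$.)

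The main point requiring care is the legitimacy of iterating the \emph{coefficient-wise} equivalence (\ref{eq:rec010201}) to obtain the full product. Multiplying two equivalences is valid only when no two contributions to a given coefficient of $E^j$ share the same $q$-order, i.e.~under the "no cancellation of leading terms" proviso that runs throughout this section. Here that proviso is automatic: the constant-term exponents $n-3/2+\lambda_1+l_1+\alpha_1+\alpha_2$ are strictly increasing in $n$, so the roots $-t_1 q^{\,n-3/2+\lambda_1+l_1+\alpha_1+\alpha_2}$ have pairwise distinct $q$-orders. Consequently each elementary symmetric function in them is dominated by a single monomial — this is precisely the situation governed by Theorem \ref{thm:zerosasymp} — so every partial product already has well-separated coefficient orders, the equivalence propagates cleanly at each step, and the iteration of (\ref{eq:rec010201}) reaches the stated form without ties. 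This closes part (i).

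For part (ii) I would read off that the leading polynomial in (i) factors over the reals into simple factors with zeros $E=-t_1 q^{\,k-3/2+\lambda_1+l_1+\alpha_1+\alpha_2}$, $k=1,\dots,N+1$, whose $q$-orders are, by the monotonicity just noted, pairwise distinct. These are exactly the hypotheses under which Corollary \ref{cor:sol} (the appendix result on continuation of the solutions of an algebraic equation under the ultradiscrete limit $q\to+0$) guarantees that each approximate root lifts to a genuine solution. Applying it yields, for sufficiently small $q$, solutions $E_k(q)$ of $c_{N+1}(E)=0$ with $E_k(q)\sim -t_1 q^{\,k-3/2+\lambda_1+l_1+\alpha_1+\alpha_2}$, which is (\ref{eq:Ekqcase2}). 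Thus the only genuinely new work beyond Propositions \ref{prop:cncn-12} and the appendix is the exponent arithmetic and the distinctness check; the structure is identical to Theorem \ref{thm:specpol1}.
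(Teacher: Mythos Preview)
Your proposal is correct and follows exactly the paper's approach: apply Proposition~\ref{prop:cncn-12} (with $k=N$) to obtain the single-factor recursion (\ref{eq:rec010201}) for all $n=1,\dots,N+1$, telescope to the product, and then invoke Corollary~\ref{cor:sol} for part~(ii). Your more detailed bookkeeping (the exponent arithmetic and the explicit distinctness check for the no-cancellation proviso) only makes explicit what the paper leaves implicit by its reference to Theorem~\ref{thm:specpol1}.
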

We investigate the polynomial solution of $q$-Heun equation for the value $E=E_k$ such that $E_k \sim -q^{k-3/2 + \lambda _1 + l_1 + \alpha _1 +\alpha _2 } t_1 $ $(k\in \{ 1,2,\dots ,N+1\})$ in the case $2N+1 +h_2 -l_2 -\beta <0 $ and $2N +l_1 -l_2 -\beta <0  $.
Write the normalized polynomial solution as 
\begin{equation}
x^{\lambda _1} \sum _{n=0}^{N } c_n (E_k)  x^n ,
\label{eq:solqHEj2}
\end{equation}
where $c_0 (E_k)=1$.
Then we have the following theorem which can be proved similarly to Theorem \ref{thm:eigenf1}.
\begin{thm} \label{thm:eigenf2}
Let $k\in \{ 1,2,\dots ,N+1 \}$.
Assume Eq.(\ref{eq:assump}), $2N+1 +h_2 -l_2 -\beta <0 $ and $2N +l_1 -l_2 -\beta <0  $ and the value $E=E_k$ is a solution of the characteristic equation $c_{N+1} (E)=0$ such that  $E_k \sim -q^{k-3/2 + \lambda _1 + l_1 + \alpha _1 +\alpha _2 } t_1 $.\\
(i) The coefficients of the normalized polynomial solution in Eq.(\ref{eq:solqHEj2}) satisfy
\begin{align}
& c_n (E_k) \sim  t_2^{-1} q^{2n -1/2 -l_2 - \beta   }  c_{n-1} (E_k), & 1 \leq n \leq k-1, \\
& c_{n} (E_k) \sim - t_1^{-1} q^{n-k +1/2 -l_1 } c_{n-1} (E_k), & k \leq n \leq N . \nonumber
\end{align}
(ii) We have
\begin{align}
& \sum _{n=0}^{N } c_n (E_k)  x^n \sim \prod _{j=1}^{k-1} ( 1 + t_2^{-1} q^{ 2j -1/2 -l_2 - \beta   }   x ) \prod _{j=k}^N (1- t_1^{-1} q^{j-k +1/2 -l_1  } x) .
\end{align}
(iii) There exists $q_j \in \Rea _{>0}$ for $j=1,2,\dots ,M$ such that the polynomial $\sum _{n=0}^{N } c_n (E_k)  x^n  $ has a zero $x= x_j(q) $ for $0<q<q_j$ which is continuous on $q$ and satisfies 
\begin{align}
& \lim _{q \to +0} \frac{x _j (q)}{- t_2 q^{-2j +1/2 +l_2 + \beta  }} =1 , \; (j=1,\dots ,k-1), \\
& \lim _{q \to +0} \frac{x _j (q)}{t_1 q^{k -j -1/2 + l_1 }} =1 ,\; (j=k , \dots ,N). \nonumber
\end{align}
\end{thm}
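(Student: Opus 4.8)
The plan is to follow the proof of Theorem~\ref{thm:eigenf1} verbatim in structure, the only change being that in the present regime the constant part of the bracket in Eq.(\ref{eq:rec0102}) and the $E$-part exchange their dominant roles. I begin by substituting $E=E_k\sim -q^{k-3/2+\lambda_1+l_1+\alpha_1+\alpha_2}t_1$ (Eq.(\ref{eq:Ekqcase2})) into the asymptotic three term relation Eq.(\ref{eq:rec0102}). Using $2\lambda_1=h_1+h_2-l_1-l_2-\alpha_1-\alpha_2-\beta+2$ one finds
\begin{equation}
E_k\,q^{\,n-1-h_1-h_2+\lambda_1}\sim -t_1\,q^{\,n+k-1/2-l_2-\beta},
\end{equation}
so that inside the bracket $\bigl(E_k q^{n-1-h_1-h_2+\lambda_1}+q^{2n-1/2-l_2-\beta}t_1\bigr)$ the contest between the exponents $n+k-1/2-l_2-\beta$ and $2n-1/2-l_2-\beta$ reduces to comparing $k$ with $n$. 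This single dichotomy at $n=k$ is what splits (i) into its two cases.

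For $1\le n\le k-1$ the inequality $n<k$ makes the $E_k$-part negligible, so the bracket is $\sim q^{2n-1/2-l_2-\beta}t_1$; an upward induction on $n$, discarding the $c_{n-2}$ term exactly as in Proposition~\ref{prop:cncn-12}, then gives $c_n(E_k)\sim t_2^{-1}q^{2n-1/2-l_2-\beta}c_{n-1}(E_k)$. Dropping the $c_{n-2}$ term is legitimate because the resulting exponent comparison reduces to $2n+l_1-l_2-\beta<2$, which holds since $n\le N$ and $2N+l_1-l_2-\beta<0$. For $k\le n\le N$ I instead run a downward induction seeded by the exact relation $c_{N+1}(E_k)=0$: rewriting Eq.(\ref{eq:rec0102}) to express $c_{n-2}$ in terms of $c_{n-1}$ and $c_n$, the condition $n\ge k+1$ now makes the $E_k$-part dominant, and the $c_n$ term is negligible precisely because $2k+l_1-l_2-\beta<1$ (again guaranteed by $2N+l_1-l_2-\beta<0$). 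Solving for the ratio and shifting the index yields $c_n(E_k)\sim -t_1^{-1}q^{n-k+1/2-l_1}c_{n-1}(E_k)$, which is Eq.(\ref{eq:rec010201}) evaluated at $E_k$, and completes (i).

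For (ii) I set $s_j=t_2^{-1}q^{2j-1/2-l_2-\beta}$ for $1\le j\le k-1$ and $s_j=-t_1^{-1}q^{j-k+1/2-l_1}$ for $k\le j\le N$, so that (i) reads $c_n(E_k)\sim s_n c_{n-1}(E_k)$. The key observation is that the leading $q$-power of $s_j$ is strictly increasing in $j$: it increases within each of the two blocks, and at the junction $j=k-1\to k$ the required inequality is once more $2k+l_1-l_2-\beta<3$. Hence $s_j$ is stronger than $s_{j+1}$ for all $j$, so Theorem~\ref{thm:zerosasymp} applies to $\prod_{j=1}^N(1+s_jx)=\sum_n d_n x^n$ and gives $d_n\sim\prod_{j=1}^n s_j$. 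Combining this with $c_0(E_k)=1=d_0$ and the ratios of (i) identifies $\sum_n c_n(E_k)x^n$ with the stated product up to $\sim$. Finally (iii) is read off from the same application of Theorem~\ref{thm:zerosasymp}, whose zeros sit at $x\sim -1/s_j$, which are exactly the asymptotics claimed.

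The main obstacle, as in Theorem~\ref{thm:eigenf1}, is not any individual estimate but the systematic verification that the ``no cancellation of leading terms'' hypothesis underlying Eq.(\ref{eq:rec0102}) genuinely holds at each step of both inductions and across the transition at $n=k$. After substituting the explicit $E_k$ and simplifying through $\lambda_1$, every such check collapses to one of the two standing inequalities $2N+1+h_2-l_2-\beta<0$ and $2N+l_1-l_2-\beta<0$, so the real work lies in organizing these exponent comparisons rather than in finding a new idea.
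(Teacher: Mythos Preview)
Your proposal is correct and follows exactly the approach the paper intends: it mirrors the proof of Theorem~\ref{thm:eigenf1} step by step, with the forward induction handling $1\le n\le k-1$ (where the constant term of the bracket dominates), the backward induction seeded by $c_{N+1}(E_k)=0$ handling $k\le n\le N$ (where the $E_k$-term dominates), and the monotonicity of the $s_j$ exponents feeding into Theorem~\ref{thm:zerosasymp} for parts (ii) and (iii). Your exponent bookkeeping is accurate, and in particular your observation that every no-cancellation check reduces to one of the two standing inequalities $2N+1+h_2-l_2-\beta<0$ and $2N+l_1-l_2-\beta<0$ is exactly the content of the paper's ``similarly to Theorem~\ref{thm:eigenf1}''.
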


\section{Concluding remarks} \label{sec:rem}

In this paper, we investigated polynomial-type solutions of the $q$-Heun equation.
We defined the spectral polynomial of the accessory parameter $E$ in the case that the parameters of the $q$-Heun equation satisfies the assumption of Proposition \ref{prop:prop0}.
The polynomial-type solution of the $q$-Heun equation exists, if the accessory parameter is a root of the spectral polynomial.
Then we obtained sufficient conditions that all the roots of the spectral polynomial is real and distinct in section \ref{sec:realroot}.
To find the behaviour of the roots of the spectral polynomial and the associated polynomial-type solution of the $q$-Heun equation, we considered the ultradiscrete limit $q \to +0$ and we obtained the behaviour of them in the case $1 +h_2 -l_2 - \beta >0 $ and $2+ 2 h_2  - l_1 - l_2 - \beta >0  $ and the case $2N+1 +h_2 -l_2 -\beta <0 $ and $2N +l_1 -l_2 -\beta <0  $.

We point out problems which should be clarified in the near future.

One problem is to consider the ultradisctere limit of the spectral polynomial and the associated polynomial-type solution of the $q$-Heun equation for the case $-2N-1 <h_2-l_2-\beta <-1 $.

In \cite{TakqH}, the variants of the $q$-Heun equation, i.e. $( A^{\langle 3 \rangle} -E) g(x)=0 $ and $( A^{\langle 2 \rangle} -E) g(x)=0$, were discussed.
It was also shown that the two equations has quasi-exact solvability, and they have polynomial-type solutions for some special cases.
Then it would be possible to discuss the spectral polynomial, real root property of them, and analysis of the roots of the spectral polynomial by the ultradiscrete limit.

As a different direction to the variants of the $q$-Heun equation, we may consider degenerations of the $q$-Heun equation as Heun's differential equation admits degenerations such as singly confluent Heun equation, doubly confluent Heun equation, bi-confluent Heun equation and tri-confluent Heun equation by confluence of the singularities (see \cite{Ron}).
Then it would be possible to consider polynomial-type solutions to degenerations of the $q$-Heun equation.

\section*{Acknowledgements}
The authors are grateful to Simon Ruijsenaars for valuable comments and fruitful discussions.
The third author was supported by JSPS KAKENHI Grant Number JP26400122.

\appendix
\section{Ultradiscrete limit of the algebraic equation}

We consider the ultradiscrete limit (i.e. $q \to +0$) of the algebraic equation 
\begin{equation}
\sum _{j=0}^M \tilde{c}_j(q) x^j = 0.
\end{equation}
We assume that $\tilde{c}_j(q) /((-1)^{p_j} c_j q^{\lambda _j}) \to 1$ as $q \to +0$ $(j=0,1,\dots ,M)$, where $p_j \in \{0,1 \}$, $c_j >0$ and $\lambda _j \in \Rea $.

Let $P$ and $N$ be the subsets of $\{0,1,\dots ,M \}$ such that $P=\{ j \: | \: p_j=0 \}$ and $N=\{ j \: | \: p_j=1 \} $.
Then the algebraic equation is written as 
\begin{equation}
\sum _{j \in P} c_j(q) x^j = \sum _{j \in N} c_j(q) x^j ,
\label{eq:jPNeq}
\end{equation}
where $c_j(q) = (-1)^{p_j} \tilde{c}_j(q) $.
Set $x=q^t $.
Then $c_j (q) x^j /( c_j q^{\lambda _j +jt}) \to 1$ as $q \to +0$.
Since $q^{\alpha } +q^{\beta } $ can be approximated as $ q^{\min (\alpha ,\beta )}$ by the limit $q\to +0$, we obtain the following ultradiscrete equation;
\begin{equation}
\min \{ \lambda _j + jt \} _{ j\in P }= \min \{ \lambda _j + jt \} _{ j\in N } . 
\label{eq:minPN}
\end{equation}
On the solution of the ultradiscrete equation, we immediately obtain the following proposition.
\begin{prop}
Let $t_0$ be a solution to Eq.(\ref{eq:minPN}) such that the minimum is attained at $k \in P$ and $k' \in N$.
Then we have $t_0 = ( \lambda _{k '} -\lambda _{k })/(k-k')$ and $\lambda _j + j t_0 \geq  \lambda _k + k t_0 = \lambda _{k '} + {k'} t_0 $ for all $j$.
\end{prop}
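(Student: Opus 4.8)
The plan is to unwind the meaning of the phrase ``the minimum is attained''. By hypothesis $t_0$ solves Eq.(\ref{eq:minPN}), and the left-hand minimum over $P$ is attained at the index $k$ while the right-hand minimum over $N$ is attained at $k'$. Thus $\lambda_k + k t_0 = \min\{\lambda_j + j t_0\}_{j \in P}$ and $\lambda_{k'} + k' t_0 = \min\{\lambda_j + j t_0\}_{j \in N}$, and since $t_0$ makes the two minima equal, the central identity $\lambda_k + k t_0 = \lambda_{k'} + k' t_0$ follows at once. Everything else is a consequence of this single equation together with the minimality of $k$ and $k'$.

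First I would read off the formula for $t_0$. Rearranging the central identity gives $\lambda_{k'} - \lambda_k = (k - k') t_0$. Here I would note that $k \neq k'$: indeed $k \in P$ and $k' \in N$, and $P$ and $N$ are disjoint by their very definitions $P = \{j \mid p_j = 0\}$ and $N = \{j \mid p_j = 1\}$. Hence division by $k - k'$ is legitimate and yields $t_0 = (\lambda_{k'} - \lambda_k)/(k - k')$, which is the first assertion of the proposition.

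Next I would establish the inequality $\lambda_j + j t_0 \geq \lambda_k + k t_0$ for every $j$ by splitting the index set along the partition $\{0, 1, \dots, M\} = P \sqcup N$. For $j \in P$, minimality of $k$ for the $P$-minimum gives $\lambda_j + j t_0 \geq \lambda_k + k t_0$ directly. For $j \in N$, minimality of $k'$ gives $\lambda_j + j t_0 \geq \lambda_{k'} + k' t_0$, and the central identity $\lambda_{k'} + k' t_0 = \lambda_k + k t_0$ then upgrades this to the same bound. Since every $j$ lies in $P$ or in $N$, the inequality holds for all $j$, and the asserted equality $\lambda_k + k t_0 = \lambda_{k'} + k' t_0$ is precisely the central identity already in hand.

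I expect no substantial obstacle: the statement is essentially a direct translation of the attainment hypothesis into algebra. The only point genuinely requiring attention is the verification that $k \neq k'$ before dividing, which I would record explicitly, since it rests on the disjointness of $P$ and $N$ rather than on any property of the exponents $\lambda_j$ themselves.
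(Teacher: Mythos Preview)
Your proof is correct and matches the paper's approach: the paper states the proposition as something one ``immediately obtain[s]'' and gives no explicit argument, since it is just an unwinding of the definitions of minimum attainment and of the disjoint partition $P \sqcup N$. Your write-up is a careful spelling-out of exactly this, including the observation $k \neq k'$ needed to divide by $k-k'$.
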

Here we assume that 
\begin{equation}
\lambda _j + j t_0 >  \lambda _k + k t_0 = \lambda _{k '} + {k'} t_0 , \; j \in \{ 0,1,\dots ,M \} \setminus \{ k,k' \}, \; k\in P, \; k'\in N .
\label{eq:ineqla}
\end{equation}  
By substituting $x=c q^{t_0}$ into the algebraic equation and observing the coeffient of the leading term $q^{\lambda _k} q^{k t_0 } (= q^{\lambda _{k'}} q^{k' t_0 })$, the constant $c$ should satisfy $c= (c_k /c_{k'}) ^{1/(k'-k)}$.

We are going to justify that the value $x =(c_k /c_{k'}) ^{1/(k'-k)} q ^{t_0 } $ is asymptotic to a solution of the algebraic equation as $q \to +0$.
\begin{thm} \label{thm:implthm}
Let $P$ and $N$ be sets such that $P \cap N = \phi $ and $P \cup N= \{0,1,\dots ,M \} $, $c_j(q) $ $(j \in \{0,1,\dots ,M \})$ be a function such that  $c_j (q) / q^{\lambda _j } \to c_j $ as $q \to +0$ for some $c_j \in \Rea _{>0}$.
We assume Eq.(\ref{eq:ineqla}).
Then the algebraic equation 
\begin{equation}
\sum _{j \in P} c_j(q) x^j = \sum _{j \in N} c_j(q) x^j .
\label{eq:jPNeq0}
\end{equation}
has a solution $x= x(q) $ $(0< q <q_0 )$ such that 
\begin{equation}
\lim _{q \to +0} \frac{x(q)}{(c_k /c_{k'}) ^{1/(k'-k)} q ^{( \lambda _{k '} -\lambda _{k })/(k-k')}} =1
\end{equation}
for some $q_0 \in \Rea _{>0}$.
\end{thm}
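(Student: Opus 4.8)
The plan is to rescale the unknown so that the equation converges to a non-degenerate two-term limit, and then to construct the solution by the intermediate value theorem together with a compactness argument, thereby avoiding any regularity hypothesis on the coefficients $c_j(q)$. First I would substitute $x = q^{t_0} y$ with $t_0 = (\lambda_{k'}-\lambda_k)/(k-k')$ and normalize by the common leading power $q^{m}$, where $m = \lambda_k + k t_0 = \lambda_{k'}+ k' t_0$ (the two are equal by the preceding proposition). This turns Eq.(\ref{eq:jPNeq0}) into $G(y,q)=0$, where
\[
G(y,q) = \sum_{j\in P} \frac{c_j(q)}{q^{\lambda_j}}\, q^{\lambda_j + j t_0 - m}\, y^j - \sum_{j\in N} \frac{c_j(q)}{q^{\lambda_j}}\, q^{\lambda_j + j t_0 - m}\, y^j .
\]
The exponents $\epsilon_j := \lambda_j + j t_0 - m$ are nonnegative, and by the strict inequality Eq.(\ref{eq:ineqla}) they vanish exactly when $j\in\{k,k'\}$ and are strictly positive otherwise. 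Since $c_j(q)/q^{\lambda_j}\to c_j$, each coefficient of $G$ converges as $q\to+0$: the $y^k$ and $y^{k'}$ coefficients tend to $c_k$ and $-c_{k'}$ respectively, while every other coefficient tends to $0$. Hence $G(\cdot,q)\to F_0$ uniformly on every compact subset of $\R$, where $F_0(y) = c_k y^k - c_{k'} y^{k'}$.

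Next I would record the non-degeneracy of the limit. The positive real number $c := (c_k/c_{k'})^{1/(k'-k)}$ satisfies $F_0(c)=0$, i.e.\ $c_k c^k = c_{k'} c^{k'}$, and a short computation using this identity gives
\[
F_0'(c) = k c_k c^{k-1} - k' c_{k'} c^{k'-1} = \frac{(k-k')\, c_k c^k}{c} \neq 0 ,
\]
because $k\neq k'$ and $c_k c^k>0$. Thus $c$ is a simple zero of the nonzero polynomial $F_0$, so $F_0$ changes sign across $c$. I can therefore fix $\delta_0\in(0,c)$ so small that $c$ is the unique zero of $F_0$ in $I:=[c-\delta_0,\,c+\delta_0]$ and $F_0(c-\delta_0)\,F_0(c+\delta_0)<0$.

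Finally I would produce the solution and its asymptotics. For each fixed small $q>0$ the function $G(\cdot,q)$ is a real polynomial, hence continuous; by the uniform convergence $G(\cdot,q)\to F_0$ on $I$, the values $G(c-\delta_0,q)$ and $G(c+\delta_0,q)$ inherit the opposite signs of $F_0$ at the endpoints once $q<q_0$ for some $q_0$, so the intermediate value theorem yields a real zero $y(q)\in I$. Setting $x(q)=q^{t_0}y(q)$ gives a solution of Eq.(\ref{eq:jPNeq0}). To identify the limit, take any sequence $q_n\to+0$; since $y(q_n)\in I$ is bounded, some subsequence converges to $y_\ast\in I$, and uniform convergence gives $F_0(y_\ast)=\lim G(y(q_n),q_n)=0$, forcing $y_\ast=c$ by the choice of $\delta_0$. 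Hence $y(q)\to c$, which is exactly $\lim_{q\to+0} x(q)/(c\,q^{t_0})=1$, the claimed asymptotic.

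The main obstacle is that the coefficients $c_j(q)$ are only assumed to have the prescribed leading behaviour $c_j(q)/q^{\lambda_j}\to c_j$, with no continuity or differentiability in $q$ asserted; consequently the implicit function theorem is not directly applicable. The argument above bypasses this by running the intermediate value theorem in the variable $y$ for each frozen $q$ and then extracting the limit by compactness. The hypothesis Eq.(\ref{eq:ineqla}) enters precisely in guaranteeing that the subdominant terms ($\epsilon_j>0$) die in the limit, so that $G(\cdot,q)$ converges to the clean two-term polynomial $F_0$ uniformly on the compact interval $I$; the only other point requiring care is isolating $c$ as the unique nearby zero of $F_0$, which is exactly what the simple-root computation secures.
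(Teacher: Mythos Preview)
Your proof is correct. Both you and the paper begin with the same rescaling $x=q^{t_0}y$ and the same normalization, arriving at a function of $(y,q)$ that converges as $q\to+0$ to $F_0(y)=c_k y^k-c_{k'}y^{k'}$ with a simple positive root at $c=(c_k/c_{k'})^{1/(k'-k)}$. From there the approaches diverge: the paper invokes the implicit function theorem at $(c,0)$, whereas you freeze $q$, locate a zero in a fixed interval $I$ by the intermediate value theorem, and then pin down the limit via a compactness/sub\-sequence argument. Your route is more elementary and, as you observe, is robust against the absence of any regularity hypothesis on $q\mapsto c_j(q)$; the implicit function theorem, as the paper applies it, tacitly needs at least continuity of the rescaled coefficients in $q$ near $0$. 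What the paper's approach buys, when that regularity is available (as it is in all the applications here, where the $c_j(q)$ are explicit), is continuity of the branch $q\mapsto x(q)$; this continuity is actually asserted in the subsequent Theorem~\ref{thm:zerosasymp} and Corollary~\ref{cor:sol}, so if you intend to recover those statements verbatim you would need to supplement your argument, e.g.\ by noting that for small $q$ the root in $I$ is unique (since $G(\cdot,q)\to F_0$ in $C^1(I)$ when the coefficients are continuous) and hence depends continuously on $q$.
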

\begin{proof}
Set $t_0 = ( \lambda _{k '} -\lambda _{k })/(k-k')$, $x = q ^{t_0 } u $ and
\begin{align}
f(u,q)= q^{-(\lambda _k +k t_0 )} \Big[ \sum _{j \in P} c_j(q) [ q ^{t_0 } u ]^j - \sum _{j \in N} c_j(q) [ q ^{t_0 } u ]^j \Big] .
\end{align}
Then the equation $f(q ^{-t_0 } x,q)=0 $ is equivalent to Eq.(\ref{eq:jPNeq0}) for $q>0$.
As $q\to +0$, we have
\begin{align}
& f(u,q)  \sim q^{-(\lambda _k +k t_0 )} \Big[ \sum _{j \in P}  c_j  q^{\lambda _j +j t_0 } u^j  - \sum _{j \in N} c_j q^{\lambda _j +j t_0 } u^j \Big] .
\end{align}
By the assumption of Eq.(\ref{eq:ineqla}), we obtain
\begin{align}
&  \lim _{q \to +0 } f(u,q) = c_k u^k  - c_{k'} u^{k'} ,
\end{align}
and define $f(u,0)$ by the limit $q\to +0$.
Then we have $f( (c_k /c_{k'}) ^{1/(k'-k)} ,0) =0$.
On the other hand, we have
\begin{align}
 \lim _{q \to +0 } \frac{\partial }{\partial u} f(u,q)  = & \lim _{q \to +0 }   q^{-(\lambda _k +k t_0 )} \Big[ \sum _{j \in P}  c_j  q^{\lambda _j +j t_0 } u^{j-1}  - \sum _{j \in N} c_j q^{\lambda _j +j t_0 } u^{j-1} \Big] \\
 = & k c_k u^{k-1} -k'c_{k'} u^{k'-1} = \frac{\partial }{\partial u} f(u,0), \nonumber \\
  \frac{\partial }{\partial u} f(u,0) |_{u=(c_k /c_{k'}) ^{1/(k'-k)} } = &  (k -k') c_k ^{(k'-1 )/(k'-k)}  c_{k'} ^{(k-1 )/(k-k')} \neq 0. \nonumber
\end{align}
By the implicit function theorem, there exists $q_0 >0$ and a continuous function $u(q)$ on $0\leq q <q_0 $ such that $f(u(q),q)=0$ and $\lim _{q\to +0} u(q) =(c_k /c_{k'}) ^{1/(k'-k)}$.
Hence Eq.(\ref{eq:jPNeq0}) has a solution $x= x(q) $ $(0< q <q_0 )$ such that 
\begin{equation}
\lim _{q \to +0} \frac{x(q)}{(c_k /c_{k'}) ^{1/(k'-k)} q ^{( \lambda _{k '} -\lambda _{k })/(k-k')}} =1.
\end{equation}
\end{proof}
\begin{thm} \label{thm:zerosasymp}
Let
\begin{equation}
\sum _{j=0}^M \tilde{c}_j(q) x^j = 0
\end{equation}
be the algebraic equation such that $\tilde{c}_{M-j}(q) /\tilde{c}_{M-j+1} (q) \to r_j q^{\mu _j}  $ as $q \to +0$ $(j=1,2,\dots ,M)$, where $r_j  \in \Rea _{\neq 0}$, $\mu _j \in \Rea $ and $\mu _1 < \mu _2 < \dots <\mu _M$.
Then there exists $q_k \in \Rea _{>0}$ for $k=1,2,\dots ,M$ such that the algebraic equation has a solution $x= x_k(q) $ for $0<q<q_k$ which is continuous on $q$ and satisfies 
\begin{equation}
\lim _{q \to +0} \frac{x _k (q)}{-r_k q^{\mu _k}} =1 .
\end{equation}
\end{thm}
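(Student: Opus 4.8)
The plan is to reduce the statement to Theorem~\ref{thm:implthm}, which produces one root once one knows that exactly two consecutive terms of the polynomial dominate at the relevant tropical slope. First I would normalize the equation by dividing through by $\tilde{c}_M(q)$: the hypothesis $\tilde{c}_{M-j}(q)/\tilde{c}_{M-j+1}(q)\to r_j q^{\mu_j}$ with $r_j\neq 0$ presupposes each ratio is defined and forces, by induction downward from $j=1$, that $\tilde{c}_m(q)\neq 0$ for all $m$ and all sufficiently small $q>0$, so this division is legitimate and leaves the roots unchanged. Telescoping the ratios then gives, for the coefficient of $x^{M-j}$,
\[
\frac{\tilde{c}_{M-j}(q)}{\tilde{c}_M(q)}\sim R_j\,q^{\Lambda_j},\qquad R_j=\prod_{i=1}^{j}r_i,\quad \Lambda_j=\sum_{i=1}^{j}\mu_i ,
\]
so the leading exponent of the coefficient of $x^m$ is $\lambda_m:=\Lambda_{M-m}$, its positive constant is $c_m=|R_{M-m}|$, and its sign is that of $R_{M-m}$. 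This places us exactly in the setting of Theorem~\ref{thm:implthm}.

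Next, fix $k\in\{1,\dots,M\}$ and take the slope $t_0=\mu_k$. The core computation is to show that the minimum of $\lambda_m+m t_0$ over $m\in\{0,\dots,M\}$ is attained at exactly the two consecutive indices $m=M-k$ and $m=M-k+1$. This follows from the telescoping identity
\[
(\lambda_m+m t_0)-(\lambda_{m+1}+(m+1)t_0)=\mu_{M-m}-\mu_k ,
\]
valid for $m=0,\dots,M-1$, together with the strict ordering $\mu_1<\mu_2<\dots<\mu_M$: the right-hand side is positive for $m<M-k$, zero for $m=M-k$, and negative for $m>M-k$, so $\lambda_m+m t_0$ strictly decreases down to $m=M-k$, ties there with $m=M-k+1$, and strictly increases afterwards. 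Hence the strict inequality~(\ref{eq:ineqla}) holds, the two balancing indices being $M-k$ and $M-k+1$. I expect this to be the main (if short) obstacle: it is precisely the step where the hypothesis $\mu_1<\dots<\mu_M$ is indispensable, guaranteeing that the associated Newton polygon has one segment per root and that no third term interferes.

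It remains to read off the sign and the constant. The two balancing coefficients have leading signs equal to those of $R_k$ and $R_{k-1}$, whose ratio is $\mathrm{sgn}(r_k)$ because $R_k=R_{k-1}r_k$. If $r_k<0$ the two terms lie on opposite sides $P$ and $N$, so Theorem~\ref{thm:implthm} applies directly; since the two indices differ by one, its constant is $c=|R_k|/|R_{k-1}|=|r_k|=-r_k$ and its slope is $t_0=\mu_k$, yielding a continuous solution $x_k(q)$ on some interval $(0,q_k)$ with $x_k(q)\sim -r_k q^{\mu_k}$. If instead $r_k>0$ the two dominant terms share a sign, so I would first substitute $x=-y$; this replaces each ratio limit $r_j q^{\mu_j}$ by $-r_j q^{\mu_j}$ while leaving every exponent $\mu_j$ untouched, reducing the $y$-equation to the previous case and producing $y_k(q)\sim r_k q^{\mu_k}$, whence $x_k=-y_k\sim -r_k q^{\mu_k}$. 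In both cases the continuity of $x_k$ and the existence of the interval $(0,q_k)$ come from the implicit function theorem invoked inside Theorem~\ref{thm:implthm}. Carrying this out for each $k=1,\dots,M$ gives the asserted family of roots.
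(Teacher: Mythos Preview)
Your proposal is correct and follows essentially the same route as the paper: normalize to monic, telescope the ratios to obtain $\tilde{c}_{M-j}(q)\sim R_j q^{\Lambda_j}$, verify that at slope $t_0=\mu_k$ the two consecutive indices $M-k$ and $M-k+1$ are the unique minimizers of $\lambda_m+mt_0$ (your telescoping identity makes this step more transparent than the paper's direct inequality), and then invoke Theorem~\ref{thm:implthm}, handling the sign of $r_k$ by the substitution $x=-y$ in the case $r_k>0$ just as the paper substitutes $x=-q^t$.
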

\begin{proof}
Let $P$ (resp. $N$) be the subsets of $\{0,1,\dots ,M \}$ such that $P= \{ 0 \} \cup \{ j \: | \: r_1 \cdots r_j >0 \}$ and $N=\{ j \: |\:  r_1 \cdots r_j < 0 \} $.
Without loss of generality, we may assume that the algebraic equation is monic, i.e. $\tilde{c}_M(q) =1$.
Then we have $\tilde{c}_{M-j}(q) \sim r_1 \cdots r_{j}q^{\mu _1+\cdots +\mu _{j}} $ and the algebraic equation is written as 
\begin{equation}
\sum _{j \in P} |\tilde{c}_j(q)| x^j = \sum _{j \in N} |\tilde{c}_j(q)| x^j .
\label{eq:jPNeq1}
\end{equation}
We investigate positive solutions to Eq.(\ref{eq:jPNeq1}).
By setting $x=q^t $, the corresponding ultradiscrete equation is written as
\begin{equation}
\min \{ \mu _1 +\cdots + \mu _j +(M-j) t \} _{ j\in P }= \min \{ \mu _1 +\cdots + \mu _j +(M-j) t \} _{ j\in N } . 
\label{eq:minPNthmdistroots1}
\end{equation}
Let $k \in \{1,2,\dots ,M \}$ such that $r_{k} <0$.
Then we have ($k-1 \in  P$ and $k \in N$) or ($k-1 \in N$ and $k \in P$).
It follows from $\mu_1 < \mu _2 < \dots <\mu _M $ that $\mu _1 +\cdots + \mu _{k-1} +(M-k+1) \mu _{k } <  \mu _1 +\cdots + \mu _{j} +(M-j) \mu _{k }$ for $j \neq k-1, k$ and the value $t_k = \mu _{k } $ satisfies Eq.(\ref{eq:minPNthmdistroots1}).
Hence we can apply Theorem \ref{thm:implthm} and there exists $q_k \in \Rea _{>0}$ such that the algebraic equation has a solution $x= x_k(q) $ for $0<q<q_k$ which is continuous on $q$ and satisfies $x _k (q)/( |r_k | q^{\mu _k}) \to 1$ as $q \to +0$. 

We investigate negative solutions to Eq.(\ref{eq:jPNeq1}).
Let $P'$ (resp. $N'$) be the subsets of $\{0,1,\dots ,M \}$ such that $P'= \{ 0 \} \cup \{ j \: | \: r_1 \cdots r_j (-1)^{j} <0 \}$ and $N'=\{ j \: |\:  r_1 \cdots r_j (-1)^{j} > 0 \} $.
By setting $x=-q^t $, the corresponding ultradiscrete equation is written as
\begin{equation}
\min \{ \mu _1 +\cdots + \mu _j +(M-j) t \} _{ j\in P' }= \min \{ \mu _1 +\cdots + \mu _j +(M-j) t \} _{ j\in N' } . 
\label{eq:minP'N'thmdistroots1}
\end{equation}
Let $k \in \{1,2,\dots ,M \}$ such that $r_{k} >0$.
Then we have ($k-1 \in  P'$ and $k \in N'$) or ($k-1 \in N'$ and $k \in P'$).
It follows from $\mu_1 < \mu _2 < \cdots <\mu _M $ that $\mu _1 +\cdots + \mu _{k-1} +(M-k+1) \mu _{k } <  \mu _1 +\cdots + \mu _{j} +(M-j) \mu _{k }$ for $j \neq k-1, k$ and the value $t_k = \mu _{k } $ satisfies Eq.(\ref{eq:minPNthmdistroots1}).
Hence we can apply Theorem \ref{thm:implthm} and there exists $q_k \in \Rea _{>0}$ such that the algebraic equation has a solution $x= x_k(q) $ for $0<q<q_k$ which is continuous on $q$ and satisfies $- x _k (q)/(r_k q^{\mu _k}) \to 1$ as $q \to +0$. 
\end{proof}
\begin{cor} \label{cor:sol}
Assume that 
\begin{equation}
\sum _{j=0}^M \tilde{c}_j(q) x^j \sim c q^{\mu '} \prod _{j=1}^M (x + r_j q^{\mu _j}  )
\end{equation}
where $r_j  \in \Rea _{\neq 0}$, $\mu _j \in \Rea $ and $\mu _1 < \mu _2 < \cdots <\mu _M$.
Then there exists $q_k \in \Rea _{>0}$ for $k=1,2,\dots ,M$ such that the algebraic equation $\sum _{j=0}^M \tilde{c}_j(q) x^j =0 $ has a solution $x= x_k(q) $ for $0<q<q_k$ which is continuous on $q$ and satisfies 
\begin{equation}
\lim _{q \to +0} \frac{x _k (q)}{-r_k q^{\mu _k}} =1 .
\end{equation}
\end{cor}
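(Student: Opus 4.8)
The plan is to deduce this corollary directly from Theorem \ref{thm:zerosasymp}: I would show that the product form assumed here forces the consecutive-coefficient ratios $\tilde{c}_{M-j}(q)/\tilde{c}_{M-j+1}(q)$ to be asymptotic to $r_j q^{\mu_j}$, which is precisely the hypothesis of that theorem. Since the equivalence $\sim$ of polynomials is defined coefficientwise, the assumption means that $\tilde{c}_{M-j}(q)$ is asymptotic to the coefficient of $x^{M-j}$ in $c q^{\mu'}\prod_{j=1}^{M}(x+r_j q^{\mu_j})$, so by transitivity of $\sim$ it will suffice to analyze that product.

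First I would expand the product and write the coefficient of $x^{M-j}$ as $c q^{\mu'} e_j(r_1 q^{\mu_1},\dots,r_M q^{\mu_M})$, where $e_j$ is the $j$-th elementary symmetric polynomial, namely
\begin{equation}
e_j(r_1 q^{\mu_1},\dots,r_M q^{\mu_M}) = \sum_{1\le i_1<\dots<i_j\le M} r_{i_1}\cdots r_{i_j}\, q^{\mu_{i_1}+\dots+\mu_{i_j}} .
\end{equation}
The crucial observation is that, because $\mu_1<\mu_2<\dots<\mu_M$, the exponent sum $\mu_{i_1}+\dots+\mu_{i_j}$ attains its minimum over admissible index sets uniquely at $(i_1,\dots,i_j)=(1,\dots,j)$, so as $q\to+0$ the single term $r_1\cdots r_j\, q^{\mu_1+\dots+\mu_j}$ dominates all others and no cancellation can erase it. I would thereby conclude $e_j\sim r_1\cdots r_j\, q^{\mu_1+\dots+\mu_j}$, hence $\tilde{c}_{M-j}(q)\sim c q^{\mu'} r_1\cdots r_j\, q^{\mu_1+\dots+\mu_j}$ for $j=0,1,\dots,M$ (empty product equal to $1$).

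Taking the quotient of two consecutive such asymptotics then gives, for $j=1,\dots,M$,
\begin{equation}
\frac{\tilde{c}_{M-j}(q)}{\tilde{c}_{M-j+1}(q)} \sim r_j q^{\mu_j},
\end{equation}
since the factors $c q^{\mu'}$ and $r_1\cdots r_{j-1}\, q^{\mu_1+\dots+\mu_{j-1}}$ cancel. This is exactly the hypothesis of Theorem \ref{thm:zerosasymp}, with $r_j\in\Rea_{\neq0}$ and $\mu_1<\dots<\mu_M$ inherited from the present assumption. Applying that theorem furnishes, for each $k$, a number $q_k\in\Rea_{>0}$ and a continuous solution $x=x_k(q)$ on $0<q<q_k$ satisfying $\lim_{q\to+0}x_k(q)/(-r_k q^{\mu_k})=1$, which is the desired conclusion.

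I expect the only genuinely delicate point to be the uniqueness of the minimizing index set in the expansion of $e_j$: I must verify that the minimal exponent sum is attained by exactly one term, so that the leading-order contributions cannot cancel. This is guaranteed precisely by the strict inequalities $\mu_1<\mu_2<\dots<\mu_M$; if any two of the $\mu$'s coincided, several terms could share the minimal exponent and their $r$-coefficients might conspire to vanish, breaking the argument. Hence the strictness of the ordering is the essential ingredient.
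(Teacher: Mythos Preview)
Your proposal is correct and follows essentially the same route as the paper: both expand the product, use the strict ordering $\mu_1<\cdots<\mu_M$ to obtain $\prod_{j=1}^M(x+r_jq^{\mu_j})\sim\sum_{j=0}^M r_1\cdots r_j\,q^{\mu_1+\cdots+\mu_j}x^{M-j}$, and then invoke Theorem~\ref{thm:zerosasymp}. Your write-up simply supplies more detail (the elementary-symmetric-polynomial argument and the explicit ratio computation) where the paper's proof is terse.
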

\begin{proof}
It follows from $\mu _1 < \mu _2 < \dots <\mu _M $ that 
\begin{equation}
\prod _{j=1}^M (x + r_j q^{\mu _j}  ) \sim \sum _{j=0}^M r_1 \cdots r_j q^{\mu _1+ \cdots + \mu _j} x^{M-j}.
\end{equation}
Therefore the corollary follows from the theorem.
\end{proof}

\end{document}